\newcommand{\Rien}[1]{}
\newcommand{\F}{{\mathcal F}}
\renewcommand{\O}{{\mathcal O}}
\newcommand{\PN}{{\mathbb P}}
\newcommand{\Pic}{{\rm Pic}}
\newcommand{\lra}{\longrightarrow}
\newcommand{\KC}{{\mathbb C}}
\newcommand{\KZ}{{\mathbb Z}}
\newcommand{\KQ}{{\mathbb Q}}
\newcommand{\KN}{{\mathbb N}}
\newcommand{\Sieg}{\mathfrak H}
\newcommand{\KR}{\mathbb R}
\newcommand{\sO}{{\mathcal O}}
\newcommand{\Ref}[1]{(\ref{#1})}
\newtheorem{lemma1}[equation]{}
\newenvironment{lemma}{\begin{lemma1}{\bf Lemma.}}{\end{lemma1}}
\newenvironment{example}{\begin{lemma1}{\bf Example.}\rm}{\end{lemma1}}
\newenvironment{abs}{\begin{lemma1}\rm}{\end{lemma1}}
\newenvironment{theorem}{\begin{lemma1}{\bf Theorem.}}{\end{lemma1}}
\newenvironment{proposition}{\begin{lemma1}{\bf Proposition.}}{\end{lemma1}}
\newenvironment{corollary}{\begin{lemma1}{\bf Corollary.}}{\end{lemma1}}
\newenvironment{definition}{\begin{lemma1}{\bf Definition.}}{\end{lemma1}}
\begin{document}

\title{Semistability of restricted tangent bundles \\
and a question of I. Biswas}
\author[P. Jahnke]{Priska Jahnke}
\address{Priska Jahnke - Mathematisches Institut - Freie Universit\"at Berlin
  - Arnimallee 3 - D-14195 Berlin, Germany}
\email{priska.jahnke@fu-berlin.de}
\author[I.Radloff]{Ivo Radloff}
\address{Ivo Radloff - Mathematisches Institut - Universit\"at Bayreuth -
  D-95440 Bayreuth, Germany}
\email{ivo.radloff@uni-bayreuth.de}
\date{\today}
\maketitle

\section*{Introduction}

Let $X$ be a complex projective manifold, $n = \dim_{\KC} X \ge 2$. Denote by $T_X$ its holomorphic tangent bundle. In many cases, $T_X$ is $H$--semistable with respect to some ample $H \in \Pic(X)$, for example when $X$ is K\"ahler--Einstein. If $T_X$ is $H$--semistable, then $T_X$ is semistable when restricted to a general complete intersection curve $C$ cut out by $n-1$ general hyperplanes in $|mH|$ for $m \gg 0$ by a theorem of Mehta and Ramanathan. It is therefore not unlikely for $T_X$ to be semistable on a {\em general} curve. 

In \cite{Biswas}, I. Biswas raised the following question: suppose $\nu^*T_X$ is semistable for {\em every} holomorphic $\nu: C \lra X$ from a compact Riemann surface. Does this mean $X$ is a finite \'etale quotient of an abelian variety? Biswas proved that the answer is yes in the case of projective surfaces as well as in the K\"ahler--Einstein case. Using different methods we will prove the general case:

\begin{theorem} \label{Main}
  On a projective manifold of dimension $n \ge 2$ the following two conditions are equivalent.
\begin{enumerate}
  \item $\nu^*T_X$ is semistable for every holomorphic $\nu: C \lra X$ from a compact Riemann surface $C$.
  \item $X$ is a finite \'etale quotient of an abelian variety.
\end{enumerate}
\end{theorem}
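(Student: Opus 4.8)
The plan is to prove the two implications separately, with $(2)\Rightarrow(1)$ being elementary and $(1)\Rightarrow(2)$ carrying all of the content.

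For $(2)\Rightarrow(1)$ I would argue by finite pullback. Let $\pi\colon A\lra X$ be a finite \'etale cover with $A$ abelian, so that $\pi^*T_X\cong T_A\cong\O_A^{\oplus n}$ is trivial. Given any $\nu\colon C\lra X$, pass to the normalization $\tilde C$ of a component of the fibre product $C\times_X A$; this produces a finite surjection $f\colon\tilde C\lra C$ together with a lift $\tilde\nu\colon\tilde C\lra A$ satisfying $f^*\nu^*T_X\cong\tilde\nu^*T_A$, which is trivial and hence semistable. Since semistability on a smooth curve is reflected by finite pullback (slopes scale by $\deg f$, so a destabilizing subsheaf of $\nu^*T_X$ would pull back to one of $f^*\nu^*T_X$), $\nu^*T_X$ is semistable.

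For $(1)\Rightarrow(2)$ I would first show that $X$ is not uniruled. If it were, Mori's theorem provides, through a general point, a free rational curve $\nu\colon\PN^1\lra X$ with $-K_X\cdot\nu_*\PN^1\le n+1$; the differential $d\nu\colon\O(2)=T_{\PN^1}\lra\nu^*T_X$ is nonzero, so its saturation is a sub-line-bundle of degree $\ge 2$, whereas $\mu(\nu^*T_X)=-K_X\cdot\nu_*\PN^1/n\le(n+1)/n<2$ for $n\ge 2$. Thus $\nu^*T_X$ would not be semistable, contradicting $(1)$. Hence $X$ is not uniruled, so by Boucksom--Demailly--Paun--Peternell $K_X$ is pseudoeffective; moreover, since restriction to a general complete-intersection curve is semistable, Mehta--Ramanathan shows that $T_X$ is $H$-semistable for every ample $H$.

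The crux is to upgrade this to $K_X\equiv 0$. Applying $(1)$ to general complete-intersection curves together with Miyaoka's generic semipositivity for the non-uniruled $X$ gives $K_X\cdot H^{n-1}\ge 0$, i.e.\ $\mu_H(T_X)\le 0$. What remains, and this is the \emph{main obstacle}, is to exclude the general-type-like case $K_X\cdot H^{n-1}>0$, equivalently to prove that $T_X$ is generically nef. Here I would use the full strength of $(1)$ (semistability on \emph{every} curve, not merely the movable ones): assuming $\mu_H(T_X)<0$, the bundle $\Omega^1_X$ is $H$-semistable of strictly positive slope, hence positive on average, and I would seek a contradiction by exhibiting a special curve $C$ on which $T_X|_C$ is forced to be unbalanced. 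Concretely I would try a reverse Bogomolov inequality for bundles semistable on every curve (restricting to a general complete-intersection surface and comparing $\Delta(T_X)\cdot H^{n-2}\ge 0$ with the opposite inequality forced by curve-wise semistability), or a foliation / Bogomolov--McQuillan analysis of the positive part of the pseudoeffective $K_X$. This would yield $K_X\cdot H^{n-1}\le 0$; combined with pseudoeffectivity and the fact that $H^{n-1}$ pairs strictly positively with every nonzero pseudoeffective class, it forces $K_X\equiv 0$.

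Once $K_X\equiv 0$ the argument closes quickly. For every $\nu\colon C\lra X$ one has $\deg\nu^*T_X=-K_X\cdot\nu_*C=0$, so the semistable bundle $\nu^*T_X$ has slope $0$ and every quotient line bundle has non-negative degree, whence $T_X$ is nef; the same argument applied to $(\nu^*T_X)^*=\nu^*\Omega^1_X$ shows $\Omega^1_X$ is nef. Thus $T_X$ is numerically flat, and by the structure theorem of Demailly--Peternell--Schneider for manifolds with nef tangent bundle and $c_1=0$, a finite \'etale cover of $X$ has trivial tangent bundle, hence is a torus; being projective it is an abelian variety, which is exactly $(2)$. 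I expect essentially all of the difficulty to be concentrated in ruling out the general-type-like case of the crux step.
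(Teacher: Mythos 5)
Your implication $(2)\Rightarrow(1)$, the non-uniruledness step, and the endgame (once $K_X\equiv 0$, numerical flatness of $T_X$ plus the Demailly--Peternell--Schneider structure theory gives an abelian \'etale cover; the paper instead uses Yau's theorem together with the Chen--Ogiue inequality) are all correct. But the step you yourself flag as the \emph{main obstacle} --- excluding the case where $K_X$ is pseudoeffective (in fact nef) and $K_X\cdot H^{n-1}>0$ --- is not proved; it is only a list of things you ``would try,'' and this is precisely where the entire content of the theorem sits. Worse, the tools you propose for it cannot succeed in principle: they are purely numerical. A ``reverse Bogomolov inequality'' is already part of the hypothesis (semistability on all curves is equivalent, by Nakayama's theorem, to $H$-semistability \emph{together with} equality in Bogomolov's inequality, and it forces full Chern proportionality $c_i(X)=\frac{1}{n^i}\binom{n}{i}c_1^i(X)$), and these identities are perfectly consistent with $K_X\cdot H^{n-1}>0$: nothing at the level of intersection numbers distinguishes the abelian case from, say, a smooth abelian group scheme over a base of general type, whose tangent bundle is projectively flat and satisfies every Chern identity above. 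The Miyaoka/Bogomolov--McQuillan foliation machinery is likewise a dead end here, since it extracts rational curves from \emph{positive} parts of $T_X$, and in the case to be excluded $T_X$ restricted to Mehta--Ramanathan curves is semistable of \emph{negative} slope, so there is no positive part to exploit.

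What actually fills this gap in the paper is non-numerical, fundamental-group-theoretic input that your proposal never obtains: semistability on all curves makes $T_X$ projectively flat, so $\PN(T_X)$ is given by a representation $\sigma\colon\pi_1(X)\to PGl_{n-1}(\KC)$, and the proof splits on whether $\im(\sigma)$ is finite or infinite. The finite case is handled via Bogomolov's theorem on rank-one subsheaves of $\Omega^1_X$, Simpson's uniformization and Siegel. The infinite case --- exactly your residual case $K_X$ nef, $K_X\not\equiv 0$, which includes all intermediate Kodaira dimensions --- requires Koll\'ar's Shafarevich maps, Zuo's theorem that the Shafarevich variety of a big representation is of general type, abundance ($C_{n,m}$ of Kawamata, Lai's results on good minimal models), Koll\'ar's theorem producing an abelian group scheme structure after an \'etale cover, and finally the Viehweg--Zuo Arakelov inequality $\mu_{K_Y}(R^1f_*\KC)\le\mu_{K_Y}(\Omega^1_Y)$ to contradict semistability of $\Omega^1_X$ restricted to a section. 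Some substitute for this Hodge-theoretic/representation-theoretic machinery is unavoidable, and its absence is a genuine, essential gap in your argument rather than a detail to be filled in.
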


Our strategy is as follows. If $X$ satisfies 1.), $T_X$ is holomorphically projectively flat, i.e., $\PN(T_X)$ comes from a representation of the fundamental group
  \[\sigma: \pi_1(X) \lra PGl_{n-1}(\KC).\]
(\cite{Biswas} or \Ref{PrFl}). We first prove that $X$ is free of rational curves \Ref{RatCurve}. Moreover $K_X$ is nef but not big \Ref{KE2}. We distinguish two cases: $\sigma(\pi_1(X))$ finite or infinite. The finite case is what we expect and here $X$ is indeed a finite \'etale quotient of an abelian variety \Ref{Finite}. The second possibility has to be excluded.

In the infinite case, ideas of Koll\'ar around the Shafarevich conjecture and of Zuo concerning representations of K\"ahler groups show that $X$ admits a rational dominant map onto some smooth variety of general type whose general fibers are good minimal models of Kodaira dimension $\kappa = 0$ or $1$ \Ref{Fibr}. We conclude that $K_X$ is in fact abundant and obtain the Iitaka fibration $f: X \lra Y$ \Ref{Abundance}. Again from a result of Koll\'ar we infer that, perhaps after some finite \'etale cover, $X$ is  an abelian group scheme over a base of general type \Ref{AbSch}. Estimates on the positivity of Hodge bundles associated to abelian fibrations are due to Viehweg and Zuo and will give the final contradiction \Ref{Arakelov}.

\section{Preliminaries}
\setcounter{equation}{0}
In this section we recall basic facts concerning nef-ness and semistability. Let $X$ be a projective manifold, let $E$ be a rank $r > 0$ holomorphic vector bundle. 

\begin{abs}{\bf Nef.} (\cite{DPS}) A line bundle $L \in \Pic(X)$ is called {\em nef} if $\deg \nu^*L \ge 0$ for every holomorphic map $\nu: C \lra X$ from a compact Riemann surface $C$. The vector bundle $E$ is called {\em nef} if $\sO_{\PN(E)}(1)$ on $\PN(E)$ is nef. It is called {\em numerically flat}, if $E$ and $E^*$ are nef.
\end{abs}

\begin{abs}{\bf Stability.} (\cite{KobB}) Unless $X$ is a curve this notion depends on a choice of an ample line bundle $H \in \Pic(X)$. For a nonzero torsion free sheaf $\F$ we define the {\em avaraged first class} as
 \[\mu(\F) := \frac{c_1(\F)}{rk \F}.\]
The {\em slope of $\F$ with respect to $H$} is the rational number $\mu_H(\F) := \mu(\F).H^{n-1}$. A vector bundle $E$ of rank $r>0$ is called {\em $H$--stable} (resp. {\em $H$--semistable)}, if for every coherent subsheaf $\F \subset E$ of rank $0 < rk \F < rk E$ we have  
  \[\mu_H(\F) < \mu_H(E) \quad (resp. \;\; \mu_H(\F) \le \mu_H(E)).\]
A theorem due to Mehta and Ramanathan says $E$ is $H$--semistable if and only if the restriction of $E$ to a general complete intersection curve cut out by hypersurface $H_i \in |mH|$, $m \gg 0$, is semistable.
\end{abs}

\begin{abs}{\bf Nef versus semistable.} In general, $\det E$ will not be divisible by $r$ in $\Pic(X)$. We can nevertheless consider $\frac{\det E}{r}$ as a real line bundle. We say that
   \[E \otimes \frac{\det E^*}{r}\]
is {\em nef}, if $S^r E \otimes \det E^*$ is nef. Equivalently, $-K_{\PN(E)/X} = -K_{\PN(E)} + \pi^*K_X$ is nef. As $c_1(S^r E \otimes \det E^*)=0$, also the dual bundle is nef, i.e., $E \otimes \frac{\det E^*}{r}$ is numerically flat in the sense of \cite{DPS}.
\end{abs}

\begin{lemma} \label{Miyak}
  The curve case $\dim X = 1$. Then $E$ is semistable if and only if $E \otimes \frac{\det E^*}{r}$ is nef.
\end{lemma}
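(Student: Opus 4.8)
The plan is to test nef-ness directly on the bundle $F := S^r E \otimes \det E^*$ of rank $N = \rk S^r E$, which by the definition in the preceding paragraph is the object whose nef-ness must be decided. First I would record its numerics: since $\mu(S^r E) = r\,\mu(E)$ and $\det E^*$ has degree $-\deg E = -r\,\mu(E)$, tensoring shifts slope by $\deg \det E^*$, so $\deg F = 0$ (consistent with $c_1(F)=0$). I would then isolate the single geometric fact about nef bundles on a curve that drives both directions: if $F \twoheadrightarrow G$ is a quotient bundle, the induced closed immersion $\PN(G) \hookrightarrow \PN(F)$ pulls $\sO_{\PN(F)}(1)$ back to $\sO_{\PN(G)}(1)$, so $F$ nef forces $G$ nef; and a nef bundle on a curve has nonnegative degree, because the determinant of a nef bundle is nef and a nef line bundle on a curve has $\deg \ge 0$. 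In short, \emph{$F$ nef implies every quotient bundle of $F$ has nonnegative degree}.

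For the implication ``nef $\Rightarrow$ semistable'' I would argue by contraposition. Assume $E$ is not semistable and pick a subsheaf of slope $> \mu(E)$; replacing it by its saturation (the curve is smooth) gives a subbundle whose quotient $q: E \twoheadrightarrow Q$ is a quotient \emph{bundle} with $\mu(Q) < \mu(E)$. Applying $S^r(-)$ yields a surjection $S^r E \twoheadrightarrow S^r Q$ onto a nonzero bundle, and tensoring with $\det E^*$ produces a quotient bundle
\[
F \twoheadrightarrow S^r Q \otimes \det E^*, \qquad \mu\bigl(S^r Q \otimes \det E^*\bigr) = r\,\mu(Q) - \deg E = r\bigl(\mu(Q)-\mu(E)\bigr) < 0.
\]
Thus $F$ admits a quotient bundle of strictly negative degree, so by the fact above $F$ is not nef, as required.

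For ``semistable $\Rightarrow$ nef'' I would use that we work over $\KC$: if $E$ is semistable then $S^r E$ is semistable (preservation of semistability under symmetric powers in characteristic zero), hence $F = S^r E \otimes \det E^*$ is semistable of degree $0$. A semistable bundle of degree $0$ on a curve is \emph{numerically flat}: its Jordan--H\"older factors are stable of degree $0$, which by the theorem of Narasimhan and Seshadri come from irreducible unitary representations of $\pi_1(C)$ and are therefore numerically flat, and numerical flatness is stable under extensions; so $F$ is numerically flat and in particular nef. The main obstacle is precisely this last implication together with the preservation of semistability under $S^r$ — both are genuinely characteristic-zero inputs (the plethysm statement and the Narasimhan--Seshadri correspondence), and indeed together they are exactly the content attributed to Miyaoka. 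Everything else — the slope identity $\mu(S^r E) = r\,\mu(E)$, the behaviour of slope under tensoring by a line bundle, and the nonnegativity of the degree of a nef bundle — is routine bookkeeping.
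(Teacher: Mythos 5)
Your argument is correct, but it is necessarily a different route from the paper's, because the paper does not prove this lemma at all: it simply cites \cite{MP}, Part I, (2.4.) (Miyaoka's characterization of semistability over a curve), so your write-up is a self-contained substitute for that black box. Both halves check out. The direction ``nef $\Rightarrow$ semistable'' is the standard quotient-bundle argument: saturating a destabilizing subsheaf gives a quotient bundle $Q$ of $E$ with $\mu(Q)<\mu(E)$, hence the quotient $S^r Q\otimes\det E^*$ of $F=S^r E\otimes\det E^*$ has slope $r\bigl(\mu(Q)-\mu(E)\bigr)<0$, which is impossible for a nef $F$ since quotients of nef bundles are nef and a nef bundle on a curve has nonnegative degree. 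The direction ``semistable $\Rightarrow$ nef'' is the classical analytic route: $F$ is semistable of degree $0$ (characteristic-zero preservation of semistability under symmetric powers), its Jordan--H\"older factors are stable of degree $0$, hence unitary flat by Narasimhan--Seshadri, hence nef, and nefness passes through extensions, so $F$ is numerically flat. You are right that this is where the genuinely nontrivial input sits; some such input is unavoidable in any proof of the cited statement, so the two routes are of essentially equal depth --- yours buys self-containedness over $\KC$ (which is all the paper needs), the citation buys brevity. If you wrote this into the paper, the only steps worth making explicit are the slope identity $\mu(S^r V)=r\,\mu(V)$ for an arbitrary bundle $V$ (you apply it to $V=Q$, whose rank may be smaller than $r$), and the convention (the paper follows \cite{DPS}) under which a quotient bundle induces a closed immersion of projectivizations compatible with the tautological line bundles; both are routine bookkeeping, as you say.
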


For a proof see for example \cite{MP}, Part I, (2.4.). More generally, summarizing results by Narasimhan, Seshadri and many others, we have the following theorem (compare \cite{NakNT}, Theorem A):

\begin{theorem} \label{NakThA}
  Let $E$ be a holomorphic vector bundle of rank $r>0$ on the projective manifold $X$. Let $H \in \Pic(X)$ be ample. The following conditions are equivalent:
  \begin{enumerate}
   \item for every holomorphic $\nu: C \lra X$ from a compact Riemann surface, $\nu^*E$ is semistable.
   \item $E \otimes \frac{\det E^*}{r}$ is nef.
   \item $E$ is $H$--semistable and the Bogomolov inequality is an equality
    \[(r-1)c_1^2(E).H^{n-2} = 2rc_2(E).H^{n-2}.\]
   \item $E$ admits a filtration into subbundles
       \[0 = E_0 \subset E_1 \subset E_2 \subset \cdots \subset E_t = E,\]
     where each $E_i/E_{i-1}$ admits a projectively flat Hermititan structure and $\mu(E_i/E_{i-1}) \equiv \mu(E)$ for every $i$.
  \end{enumerate}
\end{theorem}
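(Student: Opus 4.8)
The plan is to prove the cyclic chain $(1)\Rightarrow(2)\Rightarrow(3)\Rightarrow(4)\Rightarrow(1)$, letting the curve case \Lem{Miyak} and the Mehta--Ramanathan restriction theorem build most of the bridges, while the Kobayashi--Hitchin correspondence does the analytic heavy lifting in the one hard step. For $(1)\Leftrightarrow(2)$ the key observation is that a vector bundle $F$ on $X$ is nef if and only if $\nu^*F$ is nef for every holomorphic $\nu: C \lra X$: a curve in $\PN(F)$ either lies in a fibre, where $\sO_{\PN(F)}(1)$ is ample, or maps onto a curve in $X$, which we normalise to obtain $\nu$ and lift to $\PN(\nu^*F)$. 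Applying this to $F = S^r E \otimes \det E^*$ and using $\nu^*(S^r E \otimes \det E^*) = S^r(\nu^*E) \otimes \det(\nu^*E)^*$, condition $(2)$ reads: $\nu^*E \otimes \frac{\det(\nu^*E)^*}{r}$ is nef for every $\nu$. By the curve case \Lem{Miyak} this holds precisely when each $\nu^*E$ is semistable, which is $(1)$.

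Next I would establish $(2)\Rightarrow(3)$. Semistability of $E$ is almost immediate: by $(1)$ the restriction of $E$ to a general complete intersection curve cut out by members of $|mH|$ is semistable, so $E$ is $H$--semistable by Mehta--Ramanathan. For the Bogomolov equality I would exploit that $(2)$ makes $E \otimes \frac{\det E^*}{r}$ numerically flat, so by \cite{DPS} all its Chern classes vanish numerically. Writing $G = E \otimes \frac{\det E^*}{r}$, a short computation of the Chern classes of a twist by the $\Q$--line bundle of class $-c_1(E)/r$ gives $c_1(G)=0$ and $c_2(G) = c_2(E) - \frac{r-1}{2r}c_1^2(E) = \frac{1}{2r}\big(2rc_2(E)-(r-1)c_1^2(E)\big)$. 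Pairing $c_2(G)=0$ with $H^{n-2}$ is then exactly the asserted equality $(r-1)c_1^2(E).H^{n-2} = 2rc_2(E).H^{n-2}$.

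The implication $(3)\Rightarrow(4)$ is the crux and the only genuinely analytic step. An $H$--semistable bundle whose discriminant satisfies $\big(2rc_2(E)-(r-1)c_1^2(E)\big).H^{n-2}=0$ admits, by the Kobayashi--Hitchin correspondence (\cite{KobB}, together with the higher dimensional existence and regularity results of Uhlenbeck and Yau that are summarised in \cite{NakNT}), a Jordan--H\"older filtration $0 = E_0 \subset E_1 \subset \cdots \subset E_t = E$ by subbundles whose graded pieces $E_i/E_{i-1}$ are $H$--stable of slope $\mu(E)$ and carry projectively flat Hermitian--Einstein metrics. This is where I expect the real difficulty to lie: one must pass from the vanishing of a single intersection number to an honest filtration with \emph{flat} quotients, which is precisely the deep content of the existence theory for Hermitian--Einstein connections on semistable bundles with vanishing discriminant. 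Since the theorem is stated as a summary of known results, I would invoke this correspondence rather than reprove it.

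Finally, $(4)\Rightarrow(1)$ is formal and closes the cycle. A bundle carrying a projectively flat Hermitian structure pulls back under any $\nu: C \lra X$ to a projectively flat, hence semistable, bundle on $C$; and an iterated extension of semistable bundles all of the same slope $\mu(E)$ is again semistable. Applying this to the pullback of the filtration in $(4)$ shows $\nu^*E$ is semistable for every $\nu$, which is $(1)$. Thus the only substantive obstacle is the analytic input in $(3)\Rightarrow(4)$; the remaining implications are a matter of assembling \Lem{Miyak}, Mehta--Ramanathan, and the numerical flatness vanishing of \cite{DPS}.
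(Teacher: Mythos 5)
Your proposal is correct, but it is organized differently from the paper's proof. The paper disposes of the equivalence of (2), (3) and (4) in one stroke by citing Theorem~A of \cite{NakNT}, and then proves only $(1)\Leftrightarrow(2)$, exactly as you do: nefness of a vector bundle can be tested on curves, and \Ref{Miyak} converts nefness of $S^r(\nu^*E)\otimes\det(\nu^*E)^*$ into semistability of $\nu^*E$. You instead open up the black box partially and run the cycle $(1)\Rightarrow(2)\Rightarrow(3)\Rightarrow(4)\Rightarrow(1)$: your $(2)\Rightarrow(3)$ (Mehta--Ramanathan for $H$--semistability, vanishing of Chern classes of numerically flat bundles from \cite{DPS} for the Bogomolov equality) and your $(4)\Rightarrow(1)$ (pullbacks of projectively flat Hermitian bundles are semistable on curves, and iterated extensions of semistable bundles of equal slope are semistable) are direct arguments that the paper never writes down, while your $(3)\Rightarrow(4)$ invokes the Kobayashi--Hitchin/Uhlenbeck--Yau existence theory, which is in substance the same citation the paper makes, since that is what Nakayama's Theorem~A rests on. What your route buys is a clearer picture of where the single deep analytic input sits; what the paper's route buys is brevity. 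One small point of care in your $(2)\Rightarrow(3)$: the object $G = E \otimes \frac{\det E^*}{r}$ is only a $\KQ$--twist, not an honest bundle, so \cite{DPS} does not literally apply to it; you should apply the numerical flatness results to the genuine bundle $S^rE\otimes\det E^*$ and then relate its second Chern class (or second Chern character) to the discriminant $2rc_2(E)-(r-1)c_1^2(E)$ of $E$ by the standard twisting computation you sketch --- this is routine, so it is a presentational gap rather than a mathematical one.
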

Here a vector bundle $E$ admits a projectively flat Hermitian structure if it admits a Hermitian metric $h$ whose curvature tensor $\Theta_h$ has the form $\Theta_h = \omega \cdot Id_E$ for some $(1,1)$--Form $\omega$. Equivalenty, $\PN(E)$ is defined by a representation 
  \[\pi_1(X) \lra PU(r)\]
(\cite{NakNT} with different notation, \cite{KobB}).

\begin{proof}
  The equivalence of 2.), 3.), 4.) is Theorem A of \cite{NakNT}. Condition 2.) is equivalent to $S^rE \otimes \det E^*$ nef. A vector bundle $F$ is nef if and only if $\nu^*F$ is nef for every holomorphic map $\nu: C \lra X$ from a curve $C$. Therefore 1.) and 2.) are equivalent by \Ref{Miyak}.
\end{proof}

The following statement is well known for nef vector bundles:
\begin{corollary} \label{PullBack}
  Let $f: Y \lra X$ be holomorphic. Let $E$ be a rank $r>0$ vector bundle on $X$. 
\begin{enumerate}
 \item If $E$ satisfies one of the conditions in \Ref{NakThA}, then $f^*E$ satisfies the conditions in \Ref{NakThA}. 
 \item The converse holds if $f$ is surjective.
\end{enumerate}
\end{corollary}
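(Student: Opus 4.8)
The plan is to reduce both assertions to the behaviour of \emph{nef} vector bundles under pullback, which is the statement quoted just before the corollary. By the equivalence of conditions 1.)--4.) in \Ref{NakThA}, a rank $r$ bundle $E$ satisfies those conditions exactly when $S^r E \otimes \det E^*$ is nef. Since pullback commutes with symmetric powers, determinants and duals, one has $f^*(S^r E \otimes \det E^*) = S^r(f^* E) \otimes \det(f^* E)^*$, so the corollary is precisely the assertion that $S^r E \otimes \det E^*$ is nef if and only if its pullback is nef, the converse requiring $f$ to be surjective.

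Part 1.) is then immediate: the pullback of a nef bundle is nef, so if $E$ satisfies \Ref{NakThA} the bundle $f^*(S^r E \otimes \det E^*)$ is nef, i.e.\ $f^* E$ satisfies condition 2.). Equivalently one can check condition 1.) directly, since for any $\nu: C \lra Y$ from a compact Riemann surface $\nu^*(f^* E) = (f\circ\nu)^* E$ is semistable because $f\circ\nu: C \lra X$ is again such a map. For Part 2.) I would work with condition 1.) and a curve-lifting argument. Given $\nu: C \lra X$ from a compact Riemann surface, I form the fibre product $Y \times_X C$; as $f$ is surjective this dominates $C$, so there is an irreducible curve in it whose normalisation $C'$ maps finitely onto $C$ by some $g: C' \lra C$ and to $Y$ by some $\mu: C' \lra Y$ with $f \circ \mu = \nu \circ g$. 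Then $\mu^*(f^* E) = g^*(\nu^* E)$ is semistable by hypothesis, and since under a finite map of smooth curves slopes are multiplied by $\deg g$, any destabilising subsheaf of $\nu^* E$ would pull back to a destabilising subsheaf of $g^*(\nu^* E)$; hence $\nu^* E$ is semistable. As $\nu$ was arbitrary, $E$ satisfies condition 1.).

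The computations involved are routine, and the only genuine input is the surjective descent in Part 2.), which is exactly the well-known nef statement (\cite{DPS}) applied to $F = S^r E \otimes \det E^*$; this in turn rests on the line-bundle case for $\sO_{\PN(F)}(1)$ under the surjection $\PN(f^*F) \lra \PN(F)$. I expect the curve-lifting step to be the only point requiring care: one has to guarantee that a multisection of $Y \times_X C \lra C$ exists and is finite over $C$, which follows from the surjectivity of $f$ by choosing a general complete-intersection curve inside a component of $Y \times_X C$ that dominates $C$.
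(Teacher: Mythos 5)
Your proof is correct, and it follows exactly the route the paper intends: the paper states this corollary without proof, remarking only that it is ``well known for nef vector bundles,'' which is precisely your reduction to nefness of $S^rE \otimes \det E^*$ via condition 2.) of Theorem~\ref{NakThA}. Your curve-lifting argument (fibre product, multisection $C' \lra C$, and descent of semistability under finite maps of smooth curves because slopes scale by $\deg g$) correctly supplies the standard details behind that well-known fact, so nothing is missing.
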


We call $E$ {\em projectively flat} if $E$ admits a connection $D$ whose curvature $R_D = \omega Id_E$ for some $2$--form $\omega$. Equivalently, $E$ is projectively flat if $\PN(E)$ is defined by a representation 
  \[\pi_1(X) \lra PGl_{r-1}(\KC)\]
(\cite{KobB}). If $E$ admits a projectively flat Hermitian metric, then it is projectively flat. The converse is false in general, see the example below.

\begin{proposition} \label{PrFl}
  Let $E$ satisfies one of the conditions of \Ref{NakThA}. Then 
   \begin{enumerate}
     \item $E$ is projectively flat.
     \item $c_i(E) = \frac{1}{r^i}{r \choose i} c_1^i(E)$ in $H^{2i}(X, \KR)$.
  \end{enumerate}
\end{proposition}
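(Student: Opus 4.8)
The plan is to read off both statements from the structure of numerically flat bundles, using the equivalent formulations in \Ref{NakThA}. I would prove 2.) first, directly from condition 4.). Fix the filtration $0 = E_0 \subset \cdots \subset E_t = E$ and write $G_i = E_i/E_{i-1}$, $s_i = \rk G_i$. Each $G_i$ carries a projectively flat Hermitian metric, so its Chern connection has curvature $\Theta_i = \omega_i \cdot \id_{G_i}$ for a $(1,1)$--form $\omega_i$, and Chern--Weil gives, in $H^*(X,\KR)$,
\[ c(G_i) = \left(1 + \frac{c_1(G_i)}{s_i}\right)^{s_i}, \qquad \mathrm{ch}(G_i) = s_i\, e^{c_1(G_i)/s_i}. \]
Since $\mu(G_i) \equiv \mu(E)$ we have $c_1(G_i)/s_i = c_1(E)/r$ for every $i$, so $\mathrm{ch}(G_i) = s_i\, e^{c_1(E)/r}$, and additivity of the Chern character along the filtration yields
\[ \mathrm{ch}(E) = \sum_{i=1}^t \mathrm{ch}(G_i) = \Big(\sum_i s_i\Big) e^{c_1(E)/r} = r\, e^{c_1(E)/r}. \]
Equivalently $c(E) = (1 + c_1(E)/r)^r$, and reading off the degree $2i$ part gives $c_i(E) = \binom{r}{i} r^{-i} c_1^i(E)$, which is 2.).

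For 1.) I would pass to the endomorphism bundle. By condition 2.) the bundle $E \otimes \frac{\det E^*}{r}$ is numerically flat, hence so is its dual, and therefore
\[ \End(E) = \Big(E \otimes \tfrac{\det E^*}{r}\Big) \otimes \Big(E \otimes \tfrac{\det E^*}{r}\Big)^{\!*} \]
is numerically flat as a tensor product of numerically flat bundles, the fractional determinant twists cancelling. The filtration of 4.) induces one on $\End(E)$ whose graded pieces are the $G_i \otimes G_j^*$; each is projectively flat Hermitian with $c_1(G_i\otimes G_j^*) = s_j c_1(G_i) - s_i c_1(G_j) = 0$ by the slope computation above, hence (after a conformal change of metric, using the $\partial\bar\partial$--lemma) genuinely Hermitian flat. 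By the structure theory of numerically flat bundles \cite{DPS}, $\End(E)$ is then given by a representation $\pi_1(X) \to Gl(\KC)$ compatible with its algebra structure, which reduces the structure group of $\PN(E)$ to the projective linear group. Thus $\PN(E)$ is defined by a representation $\pi_1(X) \to PGl_{r-1}(\KC)$ and $E$ is projectively flat.

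The main obstacle is exactly this last step: \cite{DPS} literally produces only a filtration with Hermitian flat quotients, and a numerically flat bundle need not itself be Hermitian flat — this is precisely why ``projectively flat'' is strictly weaker than ``projectively flat Hermitian,'' as recorded before the Proposition. What must be verified is that the extension classes gluing the unitary graded pieces are themselves flat, so that the resulting (in general non--unitary) flat connection exists on all of $\End(E)$. The equal--slope hypothesis $\mu(G_i) \equiv \mu(E)$ is what forces the relevant extension classes to be of flat type, and it is here that the representation--theoretic content behind \cite{DPS} is genuinely needed rather than the mere filtration statement.
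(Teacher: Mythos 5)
Your part 2.) is correct, and it takes a different (and more self--contained) route than the paper: you derive the Chern class identity directly from condition 4.) of \Ref{NakThA} via Chern--Weil on the graded pieces and Whitney's formula, using that $\mu(G_i)\equiv\mu(E)$ forces $c_1(G_i)/s_i = c_1(E)/r$ in $H^2(X,\KR)$, so that $c(E)=\prod_i\bigl(1+c_1(E)/r\bigr)^{s_i}=\bigl(1+c_1(E)/r\bigr)^{r}$. The paper instead proves 1.) first and then quotes \cite{KobB}, II, (3.1.), the Chern class formula for projectively flat bundles; both orders are legitimate, and your computation for 2.) is complete as written.

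Part 1.), however, has a genuine gap --- exactly the one you flag yourself in your last paragraph. Neither \cite{DPS} nor the equal--slope hypothesis yields, by any formal argument, a flat connection on a bundle that is merely filtered by Hermitian flat subquotients: holomorphic extensions of flat bundles are classified by coherent cohomology $H^1$ of the relevant Hom--sheaf, extensions of local systems by group cohomology of $\pi_1(X)$, and the assertion that every holomorphic extension class (within this class of bundles) is realized by a local system is a theorem, not bookkeeping. The input that fills this gap is nonabelian Hodge theory, and this is what the paper uses: by \cite{Sim}, (3.10.), a semistable Higgs bundle with zero Higgs field satisfying $c_1(E).H^{n-1}=0$ and $c_2(E).H^{n-2}=0$ underlies a representation $\pi_1(X)\lra Gl_r(\KC)$, i.e.\ admits a flat connection; it is precisely Simpson's equivalence between such Higgs bundles and (not necessarily semisimple) local systems that makes the extension classes ``of flat type.'' Concretely, the paper argues: if $\mu(E)=0$, condition 3.) of \Ref{NakThA} supplies exactly the Chern class vanishing needed, so Simpson applies to $E$ directly; if $\mu(E)\not=0$, one twists by $E_1^*$ (with $E_1$ the first step of the filtration in 4.)), applies Simpson to the slope--zero bundle $E\otimes E_1^*$, tensors the resulting flat connection with the connection of the projectively flat Hermitian metric on $E_1$, and recovers $E$ as a direct summand of $E\otimes E_1^*\otimes E_1$ via the identity/trace splitting of $E_1^*\otimes E_1$. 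Note finally that your route through $\End(E)$ carries a second, independent gap: even granting a flat connection on $\End(E)$ as a bare vector bundle, one must check it respects the algebra structure before one can descend to a representation $\pi_1(X)\lra PGl_{r-1}(\KC)$ defining $\PN(E)$; the paper's direct--summand argument avoids this issue entirely by producing a projectively flat connection on $E$ itself.
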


\begin{proof}
 1.) First assume $\mu(E) = 0$. Then $c_1(E).H^{n-1} = 0$ for every $H \in \Pic(X)$ ample. By 3.) in \Ref{NakThA} $c_2(E).H^{n-2} = 0$. We may consider $E$ as a semistable Higgs bundle with zero Higgs field. By \cite{Sim}, (3.10.), $E$ is flat, i.e., admits a flat connection and is defined by a representation  $\pi_1(X) \lra Gl_r(\KC)$. Then $E$ is projectively flat.

Next assume  $\mu(E) \not= 0$. Let $E_1$ be as in 4.) of \Ref{NakThA}. Then $E \otimes E_1^*$ satisfies the conditions of \Ref{NakThA} and $\mu(E \otimes E_1^*) = 0$. Then $E \otimes E_1^*$ admits a flat connection. The tensor product of this connection with the connection associated to the projectively flat Hermitian metric on $E_1$ (\cite{KobB}, \S 5) gives a projectively flat connection on $E \otimes E_1^* \otimes E_1$. The bundle $E_1^* \otimes E_1$ has a trivial direct summand induced by the identity and the trace map. Then $E$ is a direct summand of $E \otimes E_1^* \otimes E_1$. We obtain a projectively flat connection on $E$.

2.) \cite{KobB}, II, (3.1.).
\end{proof}

\begin{example}
  1.) Let $\KZ^{2} \simeq \Lambda \subset \KC$ be a lattice, $0 \not= \chi \in Hom(\Lambda, \KC)$. Then
  \[\lambda \mapsto \left(\begin{array}{cc}
                       1 & \chi(\lambda) \\
                      0 & 1
                  \end{array}\right) \in Sl_2(\KC)\]
is a representation of $\Lambda$. The associated vector bundle corresponds to the non--split extension of $\O_{\KC/\Lambda}$ by $\O_{\KC/\Lambda}$. It is semistable and nef with numerical trivial determinant. It does not carry a (projectively) flat Hermitian metric.

2.) Let $\Gamma \subset Sl_2(\KR)$ be torsion free. Let $\Gamma$ act on $\Sieg_1 = \{z \in \KC | \Im m z > 0\}$ as a group of Moebius transformations. Assume $C = \Sieg_1/\Gamma$ is compact. The canonical representation $\Gamma \lra Sl_2(\KR)$ defines a flat vector bundle that is not nef. 
\end{example}

\section{Condition $(*)$ and first consequences}
\setcounter{equation}{0}
Let us introduce
\begin{quote}
  {\em Condition (*).} $X$ is a projective manifold of dimension $n \ge 2$ and for every compact Riemann surface and every holomorphic $\nu: C \lra X$, the pull back bundle $\nu^*T_X$ is semistable. 
\end{quote}
We derive first consequence, some can already be found in \cite{Biswas}. From \Ref{NakThA} we infer that if $X$ satisfies $(*)$, then $\PN(T_X)$ is defined by a representation
      \begin{equation} \label{sigm}
        \sigma: \pi_1(X) \lra PGl_{n-1}(\KC).
      \end{equation}
Moreover we have 
      \begin{equation} \label{ChernKl} 
        c_i(X) = \frac{1}{n^i}{n \choose i} c_1^i(X) \quad \mbox{ in } H^{2i}(X, \KR).\end{equation}
In the attempt to classify all $X$ that satisfy $(*)$ we may at any time replace $X$ by some finite \'etale cover by \Ref{PullBack}.

\begin{proposition} \label{RatCurve}
  Let $X$ satisfy $(*)$. Any holomorphic map $f: \PN_1 \lra M$ is constant and   \begin{enumerate}
    \item $K_X$ is nef and $K_Y$ is nef for evry smooth submanifold $Y \subset X$.
    \item any rational map $\xymatrix{Y \ar@{..>}[r] & X}$ from a smooth $Y$ is holomorphic.
   \end{enumerate}  
\end{proposition}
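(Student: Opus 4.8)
The plan is to establish the geometric heart of the statement first, namely that $X$ carries no nonconstant image of $\PN_1$, and then to read off the two numbered items from standard properties of varieties without rational curves. So suppose $f\colon \PN_1 \lra X$ were nonconstant. Condition $(*)$ says that $f^*T_X$ is semistable on $\PN_1$; since every bundle on $\PN_1$ is a direct sum of line bundles, semistability forces the balanced splitting $f^*T_X \simeq \sO_{\PN_1}(a)^{\oplus n}$ for a single integer $a$ (a subbundle of larger slope would destabilize). The differential $df\colon T_{\PN_1} = \sO_{\PN_1}(2) \lra f^*T_X$ is a nonzero morphism of sheaves, because $f$ is nonconstant; projecting to a suitable factor yields a nonzero map $\sO_{\PN_1}(2) \lra \sO_{\PN_1}(a)$, whence $a \ge 2$. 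Consequently
\[
  -K_X \cdot f_*[\PN_1] \;=\; \deg \det f^*T_X \;=\; na \;\ge\; 2n .
\]

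The contradiction then comes from bend and break. The existence of a single rational curve of positive anticanonical degree means that $K_X$ is not nef, so Mori's theorem produces a rational curve $C_0 \subset X$ with $0 < -K_X \cdot C_0 \le n+1$. Feeding the normalization $f_0\colon \PN_1 \lra X$ of $C_0$ through the previous paragraph gives $-K_X \cdot C_0 = n a_0 \ge 2n$. But $2n > n+1$ for $n \ge 2$, a contradiction. Hence every holomorphic $f\colon \PN_1 \lra X$ is constant, i.e.\ $X$ contains no rational curves.

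The two items now follow formally. For 1.), the absence of rational curves on $X$ means there is no $K_X$-negative curve at all, so $K_X$ is nef again by Mori's theorem; and if $Y \subset X$ is a smooth submanifold, any rational curve on $Y$ would map nonconstantly to $X$, so $Y$ too has no rational curves and $K_Y$ is nef for the same reason. For 2.), let $\phi\colon Y \dashrightarrow X$ be rational with $Y$ smooth. Resolving its indeterminacy by a sequence of blow-ups $\pi\colon \widetilde Y \lra Y$ gives a morphism $g\colon \widetilde Y \lra X$; the exceptional divisors of $\pi$ are covered by rational curves, and since $X$ admits no rational curve, $g$ is constant along each such curve. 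Therefore $g$ factors through $\pi$, and $\phi$ is in fact holomorphic.

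The only nonformal ingredient—and the step I expect to be the main obstacle to make airtight—is Mori's existence theorem for low-degree rational curves (bend and break): it is what forces a rational curve of anticanonical degree $\le n+1$ once one knows $K_X$ is non-nef, and it is precisely the incompatibility of that bound with $-K_X \cdot C = na \ge 2n$ that closes the argument. The remaining care is bookkeeping: checking that $df$ is genuinely nonzero (immediate from nonconstancy, since a map of complex manifolds with identically vanishing differential is constant on the connected $\PN_1$), and that the bound from bend and break is applied to the image cycle whose birational normalization is the map being analyzed.
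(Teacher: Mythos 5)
Your proof is correct, but it closes the key step by a different mechanism than the paper. Both arguments start identically: semistability on $\PN_1$ forces $f^*T_X \simeq \O_{\PN_1}(a)^{\oplus n}$, and the nonzero differential $T_{\PN_1} = \O_{\PN_1}(2) \lra f^*T_X$ bounds $a$ from below. The paper uses only $a>0$, concludes that $f^*T_X$ is ample, and invokes the characterization of projective space (a projective manifold carrying a rational curve with ample restricted tangent bundle is $\PN_n$, cited as \cite{MP}, 4.2); the contradiction is then that $T_{\PN_n}$ restricted to a line splits as $\O_{\PN_1}(1)^{\oplus n-1}\oplus \O_{\PN_1}(2)$, which is not semistable. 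You instead extract the sharper bound $a\ge 2$, so that every rational curve in $X$ would satisfy $-K_X\cdot C = na \ge 2n$, and play this against Mori's bend-and-break theorem, which, once $K_X$ is known not to be nef, produces a rational curve with $0 < -K_X \cdot C \le n+1 < 2n$. Both routes ultimately rest on Mori's characteristic-$p$ techniques, but yours needs only the existence statement with the degree bound $\dim X + 1$ (note this sharp bound is essential: the weaker cone-theorem bound $2\dim X$ would not close the argument), whereas the paper needs the full characterization of $\PN_n$ but only the crude estimate $a>0$. Your argument is also pleasantly uniform, in that the same computation which shows $K_X$ is non-nef rules out the low-degree curve that non-nefness produces. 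The two numbered items are handled in essentially the same way in both proofs (cone theorem plus absence of rational curves for nefness of $K_X$ and $K_Y$; exceptional loci of a resolution of indeterminacy are covered by rational curves for item 2), with your write-up spelling out the factorization of the resolved map through the blow-down slightly more carefully than the paper's one-line remark.
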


\begin{proof}
  Assume $f: \PN_1 \lra X$ is not constant. Then $f^*T_X$ is semistable. Then $f^*T_X \simeq \O_{\PN_1}(a)^{\oplus n}$ for some $a \in \KZ$. From $T_{\PN_1} \hookrightarrow f^*T_X$ we infer $a>0$. Then $f^*T_X$ is ample. Then $X \simeq \PN_n$ (\cite{MP}, 4.2.). It is well known that $T_{\PN_n}$ is $\O_{\PN_n}(1)$--stable. However, the restriction of $T_{\PN_n}$ to a line $l$ splits as   
  \[T_{\PN_n}|_l \simeq \O_{\PN_1}(1)^{\oplus n-1} \oplus \O_{\PN_1}(2)\]
and is not semistable. This contradicts (*).

1.) $K_X$ is nef by the cone theorem (\cite{KM}). 2.) If we had to blow up $Y$ to make $\xymatrix{Y \ar@{..>}[r] & X}$ holomorphic, then we would find a rational curve in $X$.
\end{proof}

\begin{definition}
  A projective manifold $X$ is {\em almost abelian}, if there exists a finite \'etale cover $A \lra X$ from an abelian variety $A$.
\end{definition}

The following theorem can already be found in \cite{Biswas}. We include a proof for convenience of the reader.

\begin{theorem} \label{KE}
  Let $X$ be a K\"ahler--Einstein manifold that satisfies $(*)$. Then $X$ is almost abelian.
\end{theorem}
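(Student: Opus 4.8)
The plan is to prove directly that the Kähler--Einstein metric is flat; once that is done, $X$ is a quotient of $\KC^n$ and Bieberbach's theorem finishes. Fix the Kähler--Einstein metric $g$, whose Ricci form satisfies $\rho=\lambda\,\omega_g$ for some constant $\lambda\in\KR$. Since $K_X$ is nef by \Ref{RatCurve}, the first Chern class cannot be positive, so $\lambda\le 0$ and the Fano case is excluded---as it must be, $T_{\PN_n}$ failing $(*)$. The metric $g$ makes $T_X$ Hermitian--Einstein: the mean curvature of its Chern connection equals $\lambda\cdot Id_{T_X}$.

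The decisive point is that this Einstein metric is already projectively flat. By $(*)$ and \Ref{ChernKl} one has $c_2(X)=\tfrac{n-1}{2n}c_1^2(X)$, that is $2n\,c_2(X)=(n-1)c_1^2(X)$ in $H^4(X,\KR)$, which is exactly the equality case of the Kobayashi--L\"ubke inequality for the Hermitian--Einstein bundle $(T_X,g)$ (\cite{KobB}). I would invoke that equality case to conclude that the Chern curvature $\Theta$ of $g$ on $T_X$ is scalar, $\Theta=\omega\cdot Id_{T_X}$ for some $(1,1)$--form $\omega$. Taking the trace gives $n\,\omega=\operatorname{tr}\Theta=\rho$, hence $\omega=\tfrac{\lambda}{n}\,\omega_g$.

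To extract $\lambda=0$ I would use that for $E=T_X$ the curvature $\Theta$ is the Riemann curvature tensor. Thus $\Theta=\tfrac{\lambda}{n}\omega_g\cdot Id$ reads $R_{i\bar{j}k\bar{l}}=\tfrac{\lambda}{n}\,g_{i\bar{j}}g_{k\bar{l}}$, i.e. $R_{i\bar{j}k\bar{l}}=\tfrac{\lambda}{n}\delta_{ij}\delta_{kl}$ in holomorphic normal coordinates at a point. The Kähler symmetry $R_{i\bar{j}k\bar{l}}=R_{k\bar{j}i\bar{l}}$ then demands $\delta_{ij}\delta_{kl}=\delta_{kj}\delta_{il}$; choosing $i=j=1$ and $k=l=2$---possible because $n\ge 2$---forces $\tfrac{\lambda}{n}=0$. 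Hence $\lambda=0$, so $\omega=0$ and $\Theta\equiv 0$: the metric $g$ is flat. The universal cover of $X$ is therefore $\KC^n$ with its standard flat metric, and by Bieberbach's theorem a finite--index subgroup of $\pi_1(X)$ acts by translations; the associated finite \'etale cover of $X$ is a complex torus, so $X$ is almost abelian.

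The step I expect to be the main obstacle is the middle one: \Ref{NakThA} and \Ref{PrFl} a priori furnish only \emph{some} projectively flat Hermitian metric on the graded pieces of a filtration, which need not be the Einstein metric $g$. The curvature computation above is legitimate only for $g$ itself, so the essential input is the equality case of the Kobayashi--L\"ubke inequality, which singles out the given Einstein metric as the projectively flat one. (When $T_X$ is stable this is immediate from uniqueness of the Hermitian--Einstein metric up to scale.)
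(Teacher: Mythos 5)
Your proof is correct, but it takes a genuinely different route from the paper's. The paper invokes the Chen--Ogiue inequality \cite{COE} for K\"ahler--Einstein manifolds, $\int_X n\,c_1^2(X)\wedge\omega^{n-2}\le\int_X 2(n+1)\,c_2(X)\wedge\omega^{n-2}$, whose equality case is constant holomorphic sectional curvature $s$: substituting $2n\,c_2(X)=(n-1)\,c_1^2(X)$ from \Ref{ChernKl} turns this into $\tfrac1n\int_X c_1^2(X)\wedge\omega^{n-2}\le0$, while nefness of $K_X$ gives the reverse inequality; hence both integrals vanish, equality holds in Chen--Ogiue, and $s=0$, so $X$ is flat. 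You instead read \Ref{ChernKl} as the equality case of the Kobayashi--L\"ubke inequality for the Hermitian--Einstein bundle $(T_X,g)$, which says that the K\"ahler--Einstein metric itself is projectively flat, and you then kill the scalar factor $\omega=\tfrac{\lambda}{n}\omega_g$ by the K\"ahler symmetry $R_{i\bar jk\bar l}=R_{k\bar ji\bar l}$ --- a pointwise argument that works precisely because the bundle is $T_X$ and $n\ge2$. Both proofs finish with Bieberbach's theorem. Your route has two minor advantages: the L\"ubke equality case (which is in the paper's own reference \cite{KobB}) applies to the given Einstein metric, so it resolves exactly the subtlety you flag at the end, with no detour through the a priori unrelated projectively flat structures furnished by \Ref{NakThA}; and it never actually uses nefness of $K_X$ --- your observation $\lambda\le0$ is dispensable, since the symmetry computation forces $\lambda=0$ outright --- so your argument is independent of \Ref{RatCurve} and the cone theorem. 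What the paper's route buys in exchange is that it avoids any pointwise curvature computation, delegating all the rigidity to the equality characterization in Chen--Ogiue.
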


\begin{proof}
  Denote by $\omega$ a K\"ahler--Einstein form. The Chen Ogiue inequality says
  \[\int_X n c_1(X)^2 \wedge \omega^{n-2} \le \int_X 2(n+1) c_2(X) \wedge \omega^{n-2}\]
with equality if and only if $X$ is of constant holomorphic sectional curvature$s$. By \Ref{ChernKl} we have $(n-1) c_1^2(X) = 2n c_2(X)$. A simple computation shows $\int_X c_1(X)^2 \wedge \omega^{n-2} \le 0$. As $K_X$ is nef, $\int_X c_1(X)^2 \wedge \omega^{n-2} = 0$. Then also $\int_X c_2(X) \wedge \omega^{n-2} = 0$. Then we have equality. Then $s=0$ and $X$ is almost abelian.
\end{proof}

\begin{corollary} \label{KE2}
  Let $X$ satisfy $(*)$. 
   \begin{enumerate}
   \item $K_X$ is not big. 
   \item If $K_X \equiv 0$, then $X$ is almost abelian. 
  \end{enumerate}
\end{corollary}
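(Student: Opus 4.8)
The plan is to reduce both assertions to the K\"ahler--Einstein case already treated in \Theo{KE}. The key leverage is \Prop{RatCurve}: since $X$ satisfies $(*)$ it carries no rational curves and $K_X$ is nef. Under these hypotheses I expect to be able, in each of the two situations, to produce a genuine K\"ahler--Einstein metric on $X$ itself and then simply invoke \Theo{KE} to conclude that $X$ is almost abelian.

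For 1.) I would argue by contradiction and suppose $K_X$ is big. As $K_X$ is also nef and $2K_X-K_X=K_X$ is nef and big, the base point free theorem (\cite{KM}) shows that $K_X$ is semiample; thus $|mK_X|$ is base point free for $m\gg 0$ and defines a morphism $\phi: X \lra X_{\mathrm{can}}$ to the canonical model with $mK_X=\phi^*\O_{X_{\mathrm{can}}}(1)$ for an ample $\O_{X_{\mathrm{can}}}(1)$. Bigness of $K_X$ forces $\dim X_{\mathrm{can}}=n$, so $\phi$ is birational, and a curve $C\subset X$ is contracted by $\phi$ if and only if $K_X.C=0$. If $\phi$ were not an isomorphism it would have a fiber of positive dimension; but a positive dimensional fiber of a birational morphism from a smooth projective variety is covered by rational curves, which \Prop{RatCurve} forbids. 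Hence $\phi$ is an isomorphism and $K_X$ is ample. By Aubin--Yau, $X$ then admits a K\"ahler--Einstein metric, so \Theo{KE} makes $X$ almost abelian and in particular forces $K_X\equiv 0$, contradicting bigness. Therefore $K_X$ is not big.

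For 2.) I would assume $K_X\equiv 0$. Numerical triviality of the line bundle $K_X$ means exactly $c_1(X)=0$ in $H^2(X,\KR)$, so by Yau's solution of the Calabi conjecture every K\"ahler class on $X$ contains a Ricci--flat representative. Such a metric is K\"ahler--Einstein, and \Theo{KE} again yields that $X$ is almost abelian. The only genuinely delicate point in the whole argument is the upgrade from ``nef and big'' to ``ample'' in 1.): it rests entirely on the fact that the exceptional fibers of the contraction to the canonical model are swept out by rational curves, which is precisely the phenomenon excluded by $(*)$ through \Prop{RatCurve}.
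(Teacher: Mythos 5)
Your proposal is correct and follows essentially the same route as the paper: in 1.) both contradict bigness via the base point free theorem, exclusion of positive dimensional fibers of the resulting birational morphism by the absence of rational curves (the paper cites Kawamata's theorem for this covering statement, which you assert without reference), then Aubin--Yau and \Theo{KE}; in 2.) both invoke Yau's theorem and \Theo{KE} directly. No substantive difference.
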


\begin{proof}
  1.) Assume $K_X$ is big. By the base point free theorem, $|dK_X|$ is spanned for $d \gg 0$. The Iitaka fibration $f: X \lra Y$ is birational, $dK_X \simeq f^*H$ for some $H \in \Pic(Y)$ ample. Positive dimensional fibers would be covered by rational curves by \cite{Kaw}, Theorem 2. By \Ref{RatCurve}, $f$ is an isomorphism. Then $K_X$ is ample. By the famous theorem of Aubin \cite{Au} and Yau \cite{Yau} $X$ is K\"ahler--Einstein. Then $K_X$ big contradicts \Ref{KE}.


2.) Assume $K_X \equiv 0$. Then $c_1(X) = 0$ in $H^2(X, \KR)$. The theorem of Yau \cite{Yau} says $X$ carries a K\"ahler--Einstein metric. The claim follows from \Ref{KE}. 
\end{proof}

\section{Finite case}
\setcounter{equation}{0}
In this section we assume that the representation map 
  \[\sigma: \pi_1(X) \lra PGl_{n-1}(\KC)\]
defining $\PN(T_X)$ has a finite image. Note that this is the case if $X$ is almost abelian. We prove the converse:

\begin{proposition} \label{Finite}
  Let $X$ satisfy $(*)$. If $\sigma$ has a finite image, then $X$ is almost abelian.
\end{proposition}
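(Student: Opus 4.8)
The plan is to exploit the finiteness of $\sigma$ to pass to a finite \'etale cover on which $\PN(T_X)$ becomes the trivial projective bundle, to read off a splitting of $T_X$, and then to show that this splitting forces $c_1(X)=0$, so that the second part of \Cor{KE2} applies. First I would replace $X$ by the connected finite \'etale cover $\tilde X \lra X$ corresponding to $\ker \sigma$, which has finite index since $\sigma(\pi_1(X))$ is finite. By \Cor{PullBack} the cover $\tilde X$ again satisfies the conditions of \Theo{NakThA}, while the induced representation is trivial, so $\PN(T_{\tilde X}) \cong \tilde X \times \PN_{n-1}$ as a projective bundle over $\tilde X$. Pushing down the relative $\sO(1)$ then gives $T_{\tilde X} \cong L^{\oplus n}$ for a single line bundle $L \in \Pic(\tilde X)$, whence $-K_{\tilde X} = nL$, i.e. $c_1(\tilde X) = n\, c_1(L)$. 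Since almost abelianness descends along finite \'etale covers (if $A \lra \tilde X$ is \'etale with $A$ abelian, then $A \lra \tilde X \lra X$ is \'etale), it suffices to treat $\tilde X$, which I now rename $X$.

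The goal thus reduces to showing $c_1(L) = 0$ in $H^2(X,\KR)$: for then $K_X \equiv 0$ and the second part of \Cor{KE2} finishes. By \Prop{RatCurve} $K_X$ is nef, and by the first part of \Cor{KE2} it is not big, so $c_1(L^*) = \frac{1}{n}c_1(K_X)$ is a nef class with $(c_1(L^*))^n = 0$; in particular $c_1(L)\cdot H^{n-1} \le 0$ for every ample $H$. To force $c_1(L)\equiv 0$ I would exploit that the summand $L \subset T_X$ is a one--dimensional holomorphic foliation $\F$ with $T_\F = L$. Since $X$ carries no rational curves \Prop{RatCurve}, this foliation cannot be positive along a general complete intersection curve (Miyaoka, and bend--and--break for foliations in the spirit of Bogomolov--McQuillan), and the splitting makes $L$ at once a sub- and a quotient bundle of $T_X$. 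Feeding both features, together with Miyaoka's generic semipositivity of $\Omega^1_X = (L^*)^{\oplus n}$ (valid as $X$ is not uniruled) and the Hodge index theorem, into the nef class $c_1(L^*)$ should collapse its numerical dimension to $0$, giving $c_1(L)=0$.

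The main obstacle is precisely this positivity analysis of the summand $L$. Condition $(*)$ itself contributes nothing further here: a bundle of the shape $L^{\oplus n}$ restricts on every curve to $(\nu^*L)^{\oplus n}$, which is automatically semistable, and both the Chern relation \Ref{ChernKl} and the Bogomolov equality of \Theo{NakThA} hold identically. Hence neither the semistability on curves nor the characteristic classes detect the sign of $L$, and one is obliged to use the genuine tangent--bundle structure --- the integrability of the line summands as foliations and the absence of rational curves --- to control $c_1(L)$.

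Securing the numerical triviality of $L$, rather than merely the one--sided bound $c_1(L)\cdot H^{n-1} \le 0$, is the delicate step, because the complementary bound must come from a geometric rather than a purely numerical input. I expect that the cleanest realization is to show that $L$ is in fact nef: combined with $L^*$ nef this gives $c_1(L)\equiv 0$ directly, and the no--rational--curve hypothesis is exactly what should prevent the foliation $\F$ from having a negative--degree direction. Once $c_1(X)=0$ is in hand, the second part of \Cor{KE2} (through Yau's theorem and \Theo{KE}) shows that $X$, and therefore the original manifold, is almost abelian.
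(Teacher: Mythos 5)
Your first paragraph is exactly the paper's own reduction: pass to the cover trivializing $\sigma$, obtain $T_X \simeq L^{\oplus n}$ and $K_X \simeq (L^*)^{\otimes n}$, observe that almost abelianness of the cover suffices, and aim for $K_X \equiv 0$ so that the second part of \Ref{KE2} applies. From that point on, however, there is a genuine gap, and the heuristic you lean on points in the wrong direction. The Bogomolov--McQuillan circle of ideas (bend--and--break for foliations) says that \emph{positivity} of a foliation along a curve produces rational curves: if $T_\F|_C$ is ample for some curve $C$, the leaves through $C$ are algebraic and rationally connected. So the absence of rational curves only rules out $L\cdot C > 0$, i.e.\ it re-proves that $L^*$ is nef --- which you already know, since $K_X = nL^*$ is nef by the cone theorem. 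It gives nothing toward the inequality you actually need, $L\cdot C \ge 0$. The same applies to Miyaoka's generic semipositivity of $\Omega^1_X = (L^*)^{\oplus n}$: it again only bounds $L\cdot H^{n-1}$ from above. Nor is ``no rational curves prevents a foliation from having negative-degree directions'' a valid principle: on a product of two curves of genus $\ge 2$ the two tautological foliations have negative degree on their leaves, and there are no rational curves at all. Thus every tool you invoke delivers only the one-sided bound you already have, and the decisive step (``collapse the numerical dimension to $0$'', ``show $L$ is in fact nef'') remains a hope rather than an argument; indeed, given that $L^*$ is nef, proving $L$ nef is literally equivalent to the desired conclusion $L \equiv 0$, so it is a restatement, not a realization.

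What actually closes this gap in the paper is a different and much deeper chain. First, any of the inclusions $L^* \hookrightarrow \Omega^1_X$ with $nL^* \equiv K_X$ feeds into Bogomolov's theorem on line subsheaves of $\Omega^1_X$ (\Ref{LBDO}), giving $\nu(X) \le 1$, i.e.\ $K_X^2.H^{n-2} = 0$ --- strictly stronger than the non-bigness $K_X^n = 0$ you quote. Second, the Chern identity \Ref{ChernKl} then forces $c_2(X).H^{n-2} = \frac{n-1}{2n}K_X^2.H^{n-2} = 0$. Third, assuming $K_X \not\equiv 0$, Simpson's uniformization theorem (\cite{Simp1}, 9.7) applies and shows the universal cover of $X$ is a polydisk $\Sieg_1 \times \cdots \times \Sieg_1$; by \cite{Sieg} such a quotient has ample canonical bundle, contradicting the first part of \Ref{KE2}. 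Hence $K_X \equiv 0$, and \Ref{KE2} finishes. Note that, as you yourself observe, once $T_X \simeq L^{\oplus n}$ condition $(*)$ and the Chern relation hold automatically, so the complementary bound cannot come from them or from soft numerical arguments; it genuinely requires the Bogomolov--Simpson--Siegel input, which your proposal does not supply.
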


\begin{proof}
  The kernel of $\sigma$ induces a finite \'etale cover $X' \lra X$. After replacing $X$ by $X'$ we may assume that $\sigma$ is in fact trivial. Then $\PN(T_{X}) \simeq \PN_{n-1} \times X$ and $T_X \simeq L^{\oplus n}$ for some line bundle $L \in \Pic(X)$. Then $K_X \simeq (L^*)^{\otimes n}$.

Consider one of the many inclusions $L^* \lra \Omega_X^1$. A well known result of Bogomolov (\cite{Bog}, p. 501) says $\kappa(L^*) \le 1$. Then $\kappa(K_X) \le 1$. In fact Bogomolov's result gives more in our situation (see also \cite{Pet}, 2.19.):

\begin{lemma} \label{LBDO}
  Let $X$ be a projective manifold of dimension $n \ge 2$. Let $K_X$ be nef. Assume there exists an inclusion of sheaves
    \[0 \lra L^* \lra \Omega_X^1\]
  where $L \in \Pic(X)$ and $rL^* \equiv K_X$ for some rational $r \in \KQ^+$. Then $\kappa(X) \le \nu(X) \le 1$, i.e., $K_X^2.H^{n-2} = 0$ for every $H \in \Pic(X)$ ample.
\end{lemma}

\begin{proof}
As $K_X$ is nef, $\kappa(X) \le \nu(X)$ (\cite{MP}, IV, 2.3.). If $X$ is a surface, $\kappa(L^*) \le 1$ by (\cite{Bog}). Then $L^*$ is not big so $K_X^2 = 0$. In the case $n>2$ let $H \in \Pic(X)$ be very ample, $S = H_1 \cap \dots \cap H_{n-2}$ with general $H_i \in |H|$. Consider the composition $L^*|_S \lra  \Omega_X^1|_S \lra \Omega_S^1$. It is non--zero for otherwise we obtain a non trivial map $L^* \lra N^*_{S/X}$ and a non-zero section of $N_{S/X}^* \otimes L$ which is impossible as $N_{S/X} \otimes L^*$ is ample.  Again by (\cite{Bog}) $L^*|_S$ is not big. Then $K_X|_S$ is not big and $(K_X|_S)^2 = K_X^2.H^{n-2}=0$. Then $\nu(X) \le 1$. 
\end{proof}

We continue the proof of \Ref{Finite}. Assume $K_X \not\equiv 0$. Let $H \in \Pic(X)$ be ample. Then $L.H^{n-1}<0$ as $L^*$ is nef. By \Ref{LBDO} we have $c_1(X)^2.H^{n-2}=0$. By \Ref{ChernKl} also $c_2(X).H^{n-2}= \frac{n-1}{2n}K_X^2.H^{n-1}=0$. By \cite{Simp1}, 9.7., the universal cover $\tilde{X} \simeq \Sieg_1 \times \cdots \times \Sieg_1$. By \cite{Sieg}, $K_X$ is ample. This contradicts \Ref{KE2}. Therefore, $K_X \equiv 0$ and $X$ is almost abelian by \Ref{KE2}.
\end{proof}

\section{Infinite case}
\setcounter{equation}{0}
Assume from now on that the representation 
 \[\sigma: \pi_1(X) \lra PGl_{n-1}(\KC)\]
describing $\PN(T_X)$ has an infinite image. This is in fact not possible as we will see at the very end. The main result of this section is:

\begin{theorem} \label{Abundance}
  Let $X$ satisfy $(*)$ and assume $\sigma$ is infinite. Then $X$ is a good minimal model and $n > \kappa(X)>0$. The general fiber of the Iitaka fibration 
    \[f: X \lra Y\]
is almost abelian.
\end{theorem}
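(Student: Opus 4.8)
The overall strategy is to isolate a single structural input — a fibration onto a base of general type coming from the representation $\sigma$ — and then read off abundance, the Kodaira dimension bounds, and the shape of the general fibre by comparatively soft arguments. The heart of the matter, and what I expect to be the main obstacle, is to produce from the infinite representation $\sigma: \pi_1(X) \lra PGl_{n-1}(\KC)$ a dominant rational map $h$ from $X$ onto a smooth projective variety $W$ of general type with $\dim W > 0$, whose general fibre $F$ is a good minimal model with $\kappa(F) \in \{0,1\}$; this is the content of \Ref{Fibr}. The idea is to attach to $\sigma$ a Higgs bundle and to combine Zuo's theory of representations of K\"ahler groups with Koll\'ar's Shafarevich ($\Gamma$--reduction) map. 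Koll\'ar's map contracts exactly the maximal subvarieties along which $\sigma$ has finite image, while Zuo's negativity estimates for the kernel of the Higgs field force the base to be of general type; on the fibres $\sigma$ is finite, so the finite--case analysis behind \Ref{Finite}, together with nefness of $K_X$ from \Ref{RatCurve}, bounds their Kodaira dimension. After blowing up $X$ (harmless, since $\kappa$ is a birational invariant) one may take $h$ to be a genuine morphism.

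Granting \Ref{Fibr}, the Kodaira dimension bounds are immediate. The upper bound $\kappa(X) < n$ is just the non--bigness of $K_X$ from \Ref{KE2}. For the lower bound I would invoke subadditivity of the Kodaira dimension over a base of general type (Koll\'ar, Viehweg): since $\kappa(W) = \dim W$ and the general fibre satisfies $\kappa(F) \ge 0$, one obtains
  \[\kappa(X) \;\ge\; \kappa(W) + \kappa(F) \;=\; \dim W + \kappa(F) \;\ge\; \dim W \;>\; 0.\]

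Next I would establish abundance. As $K_X$ is nef by \Ref{RatCurve}, it suffices to see that $K_X$ is semiample. I would deduce this from the fibration $h$: the base $W$ is of general type and the general fibre, having $\kappa(F) \le 1$, admits a good minimal model by the classical low--Kodaira--dimension cases of Kawamata. Passing to the relative Iitaka fibration over $W$ reduces to fibres with numerically trivial canonical bundle, where Kawamata's canonical bundle formula expresses $K_X$ as the pull--back of a big class on a variety of general type and yields semiampleness; equivalently, by the existence of good minimal models for varieties fibred over a base of general type with fibres admitting good minimal models (in the spirit of Koll\'ar and Kawamata), and since $X$ is already minimal, $K_X$ is semiample. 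Thus $X$ is a good minimal model and the Iitaka fibration is a morphism $f: X \lra Y$ with $\dim Y = \kappa(X)$.

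It remains to identify the general fibre $F_0$ of $f$. Since $K_X \sim_{\Q} f^*A$ for some ample $\Q$--divisor $A$ on $Y$, adjunction together with the triviality of $N_{F_0/X} \cong \O_{F_0}^{\oplus \dim Y}$ gives $K_{F_0} \sim_{\Q} K_X|_{F_0} \sim_{\Q} 0$. I claim $F_0$ again satisfies $(*)$. Indeed, for any $\nu: C \lra F_0 \subset X$ the bundle $\nu^*T_X$ is semistable with $\deg \nu^*T_X = -\deg \nu^*K_X = 0$, hence of slope $0$; from the exact sequence $0 \lra \nu^*T_{F_0} \lra \nu^*T_X \lra \nu^*N_{F_0/X} \lra 0$ with $\nu^*N_{F_0/X} \cong \O_C^{\oplus \dim Y}$ the subbundle $\nu^*T_{F_0}$ has degree $0$, and any subsheaf of it, being a subsheaf of the degree--zero semistable bundle $\nu^*T_X$, has non--positive slope. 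Therefore $\nu^*T_{F_0}$ is semistable, so $F_0$ satisfies $(*)$ (when $\dim F_0 = 1$ the fibre $F_0$ is an elliptic curve, already almost abelian). Since $K_{F_0} \equiv 0$, \Ref{KE2} shows that $F_0$ is almost abelian, which completes the argument.
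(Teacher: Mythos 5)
Your overall skeleton matches the paper's: grant \Ref{Fibr}, get $\kappa(X)<n$ from \Ref{KE2}, get $\kappa(X)>0$ from subadditivity over the general-type base, then invoke an MMP result to conclude $X$ is a good minimal model, and finally identify the general Iitaka fibre. However, there are two genuine gaps.

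First, you state the structural input with $\dim W>0$, but this is strictly stronger than what \Ref{Fibr} provides and is in fact unobtainable in general: when the Zariski closure $N$ of $\sigma(\pi_1(X))$ is solvable (which is perfectly compatible with $\sigma$ infinite), the Koll\'ar--Zuo machinery produces nothing, and \Ref{Fibr} deliberately allows $W$ to be a point. In that case your chain $\kappa(X)\ge \dim W>0$ collapses, and your abundance argument, which leans on the fibration over a positive-dimensional general-type base, has nothing to work with. The paper handles this case separately: if $W$ is a point then $X=X_w$, so by \Ref{Fibr} $X$ is itself a good minimal model which is either almost abelian --- impossible, since then $\sigma$ would have finite image --- or has $\kappa(X)=1$ with almost abelian Iitaka fibres. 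This is an easy patch, but as written your proof simply excludes the solvable case by fiat.

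Second, the abundance step, which is the real content of the theorem given \Ref{Fibr}, is carried by misattributed or nonexistent citations. There are no ``classical low-Kodaira-dimension cases of Kawamata'' giving good minimal models of the fibres: for $\kappa=0$ in arbitrary dimension this is precisely the open abundance problem, and it is exactly what the paper's Lemma~\ref{KodNull} proves using the special structure of condition $(F)$ (harmless here only because \Ref{Fibr} already asserts the fibres are good minimal models). More seriously, ``existence of good minimal models for varieties fibred over a base of general type with fibres admitting good minimal models'' is not a classical Koll\'ar--Kawamata theorem; the tool the paper uses is Lai's theorem (resting on BCHM), and Lai's statement applies to the general fibre of the \emph{Iitaka fibration} of $X$, not of $g$. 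So one must first identify that fibre --- via $C_{n,m}$ and the dimension comparison $\dim Y$ versus $\dim W$, distinguishing $\kappa(X_w)=0$ from $\kappa(X_w)=1$ --- which is exactly the work your invocation black-boxes. The canonical-bundle-formula variant you offer has the same problem: semiampleness of the moduli plus discriminant part on the base is not automatic. On the positive side, your final step is correct and genuinely nicer than the paper's: once abundance is known, showing directly that the general Iitaka fibre $F_0$ satisfies $(*)$ (by the slope-zero subsheaf argument) and has $K_{F_0}\equiv 0$, hence is almost abelian by \Ref{KE2}, avoids having to match $F_0$ with $X_w$ or $F_c$. But note this argument needs $K_X\sim_{\Q}f^*A$, so it can only run after abundance and cannot be used to repair the earlier steps.
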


A rational map $\xymatrix{X \ar@{..>}[r] & W}$ is almost holomorphic if there exist Zariski dense open subsets $X^0 \subset X$ and $W^0 \subset W$ such that $X^0 \lra W^0$ is holomorphic and  proper. \Ref{Abundance} will follow from:

\begin{proposition} \label{Fibr}
  Let $X$ satisfy $(*)$ and assume $\sigma$ is infinite. After replacing $X$ by a finite \'etale cover, we find an almost holomorphic dominant rational map
\begin{equation} \label{VVVV}
  g: \xymatrix{X \ar@{..>}[r] & W}
\end{equation}
onto a point or a smooth $W$ of general type, $\dim X > \dim W \ge 0$. If $X_w$ denotes a very general fiber, then $X_w$ is a good minimal model. Either
\begin{enumerate}
 \item $X_w$ is almost abelian or
 \item the Iitaka fibration $X_w \lra C$ is an almost abelian fibration over a curve $C$.
\end{enumerate}
\end{proposition}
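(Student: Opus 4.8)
The plan is to combine the factorization theory for linear representations of K\"ahler groups, due to Koll\'ar and Zuo, which produces the map $g$ onto a base of general type, with the Bogomolov--type estimate \Lem{LBDO}, which controls the fibres.

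First I would linearize $\sigma$. As $PGl_{n-1}(\KC)$ has trivial centre, its adjoint representation $\mathrm{Ad}\colon PGl_{n-1}(\KC)\hookrightarrow Gl(\mathfrak{sl}_n(\KC))$ is injective, so $\rho:=\mathrm{Ad}\circ\sigma$ is a linear representation with infinite image underlying the flat bundle $\End_0(T_X)$. After passing to the semisimplification $\rho^{ss}$ and a finite \'etale cover of $X$, I may assume that the Zariski closure of $\rho^{ss}(\pi_1(X))$ is connected reductive.

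Now the key input. Suppose first $\rho^{ss}$ is infinite. Then the Shafarevich--map construction of Koll\'ar \cite{Kollar}, together with Zuo's theorem on the Kodaira dimension of the Shafarevich variety of a reductive representation \cite{Zuo}, yields an almost holomorphic dominant rational map $g : \xymatrix{X \ar@{..>}[r] & W}$ onto a smooth projective $W$ of general type whose very general fibre $X_w$ is contracted by the representation, i.e. $\sigma|_{\pi_1(X_w)}$ has finite image. Since $K_X$ is not big by \Ref{KE2}, $X$ is not of general type, so $g$ cannot be generically finite; hence $\dim X>\dim W$ and $X_w$ is positive--dimensional. In the remaining case $\rho^{ss}$ is finite while $\rho$ is infinite; after a finite \'etale cover $\sigma$ then takes values in a unipotent subgroup, and I set $W$ to be a point and $X_w=X$. \emph{This step is the main obstacle}: pinning down that the target of the Shafarevich map is of general type and that the monodromy degenerates on the fibres is exactly where the analytic machinery (harmonic maps, variations of Hodge structure) is needed.

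The fibre geometry I expect to be comparatively formal. After a further finite \'etale cover of $X_w$ I may assume $\PN(T_X)|_{X_w}$ is trivial, so $T_X|_{X_w}\simeq L^{\oplus n}$ for some $L\in\Pic(X_w)$; in the unipotent case $T_X$ is instead a successive self--extension of $L$, with quotient $T_X\twoheadrightarrow L$. Either way, dualizing the inclusion $T_{X_w}\hookrightarrow T_X|_{X_w}$ (resp. the quotient $T_X\twoheadrightarrow L$) produces a nonzero, hence injective, map $L^*\hookrightarrow\Omega^1_{X_w}$, while $\det T_X=-K_X$ together with adjunction along the fibre (whose normal bundle is trivial) gives $nL^*\equiv K_{X_w}$. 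As $X_w\subset X$ is a smooth submanifold, $K_{X_w}$ is nef by \Ref{RatCurve}, so \Lem{LBDO} applies and gives $\kappa(X_w)\le\nu(X_w)\le 1$. Because abundance holds in numerical dimension at most one, $X_w$ is a good minimal model and $\kappa(X_w)=\nu(X_w)\in\{0,1\}$.

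Finally the two cases. If $\kappa(X_w)=0$ then $K_{X_w}\equiv 0$, hence $L\equiv 0$ and $T_X|_{X_w}\simeq L^{\oplus n}$ is numerically flat; as the quotient $N_{X_w/X}$ is trivial we get $c_1(X_w)=c_2(X_w)=0$, and the argument of \Ref{KE} (Yau's theorem and the Chen--Ogiue equality) shows $X_w$ is almost abelian -- this is alternative 1.). If $\kappa(X_w)=1$, semiampleness of $K_{X_w}$ gives the Iitaka fibration $f\colon X_w\lra C$ onto a curve with $K_{X_w}\sim_{\KQ}f^* A$ for an ample $A$ on $C$; a very general fibre $F$ then has $K_{X_w}|_F\equiv 0$ and $L|_F\equiv 0$, so the same computation shows $F$ almost abelian. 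Thus $f$ is an almost abelian fibration over $C$, which is alternative 2.).
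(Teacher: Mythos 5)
Your overall architecture matches the paper's (Kollár's Shafarevich map plus Zuo's theorem to get the base $W$ of general type, then a Bogomolov-type analysis of the fibres via \Lem{LBDO}), but there are two genuine gaps, and they sit exactly where the paper does its hardest work. The first is your claim that the very general fibre $X_w$ of the Shafarevich map attached to $\rho^{ss}$ satisfies ``$\sigma|_{\pi_1(X_w)}$ has finite image.'' The Shafarevich map for $\rho^{ss}$ only contracts subvarieties on which the \emph{semisimplified} monodromy is finite; the restriction of $\sigma$ itself to $\pi_1(X_w)$ lands in the kernel of the semisimplification and can perfectly well be infinite, with virtually solvable (unipotent) image. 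Consequently your next step -- ``after a further finite \'etale cover of $X_w$ I may assume $\PN(T_X)|_{X_w}$ is trivial, so $T_X|_{X_w}\simeq L^{\oplus n}$'' -- is unjustified: no finite cover trivializes an infinite unipotent monodromy. This is precisely why the paper introduces condition $(F)$, which allows $\sigma|_F\colon\pi_1(F)\lra Rad(N)$ to be an arbitrary (possibly infinite) solvable representation, lifts it by Lie's theorem to a triangular representation in $Sl_n(\KC)$, and works with the resulting filtration of $T_X|_F$ by numerically flat bundles; the reduction to \emph{trivial} fibre monodromy is achieved only at the end of Lemma \Ref{KodNull}, by induction along the derived series of $Rad(N)$ through successive Albanese fibrations. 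Relatedly, Zuo's theorem (\cite{Zuo}, Theorem 1) concerns big representations into almost simple groups (semisimple by induction); for an infinite $\rho^{ss}$ whose Zariski closure is a torus -- which your reductive reduction permits -- the Shafarevich variety need not be of general type (consider a generic character of an abelian variety), so that case also falls through your dichotomy, whereas the paper shunts everything solvable, including the abelian part, into $Rad(N)$ and hence into the fibre analysis.

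The second gap is the assertion that ``abundance holds in numerical dimension at most one.'' That is not a known theorem: abundance is known for $\nu=0$ and $\nu=\dim X$, but $\nu=1$ with $\kappa\le 0$ is open. So you may not conclude $\kappa(X_w)=\nu(X_w)\in\{0,1\}$, nor that $\kappa(X_w)=0$ forces $K_{X_w}\equiv 0$. The paper fills this hole in two steps you skip entirely: first it proves $\kappa(F)\ge 0$ by using the numerically flat filtration to produce a section of $K_F^{\otimes k}\otimes P$ with $P\in\Pic^0(F)$, and then invoking the Chen--Hacon theorem (\cite{CH}) that the loci $V^m(K_F)$ are torsion translates of subtori, so that $P$ may be replaced by a torsion bundle; second, in the case $\kappa(F)=0$ it proves \emph{directly} that $F$ is almost abelian (Lemma \Ref{KodNull}), via $q(F)\ge\dim F$ and Kawamata's characterization of abelian varieties \cite{KawAb}, which in particular rules out the open configuration $\kappa=0<\nu=1$. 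Only your case $\kappa(X_w)=1$ is legitimate as written, since there $\kappa=\nu$ and Kawamata's abundance theorem applies.
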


\Ref{Fibr} implies \Ref{Abundance}:

\begin{proof}[Proof of \Ref{Abundance}.]
  Replace $X$ by the finite cover from \Ref{Fibr}. Denote by $g_1: X_1 \lra W$ a resolution of the rational map $g$ in \Ref{Fibr}. Fibers are good minimal models. By \cite{Kaw85}, $C_{n,m}$ is true for $g_1$. Hence
  \begin{equation} \label{Cnm}
    \kappa(X) = \kappa(X_1) \ge \kappa(X_w) + \kappa(W) = \kappa(X_w) + \dim W.
  \end{equation}
The right hand side is either $\dim W$ or $\dim W+1$ by \Ref{Fibr}. In the case $\kappa(X_w) + \dim W = 0$, $W$ is a point. Then $X = X_w$ almost abelian and $\sigma$ is finite. This shows $\kappa(X) > 0$. By \Ref{KE2}, $n > \kappa(X)$.

Denote the rational Iitaka fibration by $f: \xymatrix{X \ar@{..>}[r] & Y}$ where $\dim Y = \kappa(X)$. Consider the two cases  $\kappa(X_w) = 0$ and $= 1$.

If $\kappa(X_w) = 0$, then $\dim Y \ge \dim W$ by \Ref{Cnm}.  The fiber $X_w$ is almost abelian and $K_X|_{X_w} \equiv 0$ by the adjunction formula. Then $f(X_w)$ will be a point and therefore $\dim Y \le \dim W$. Then $W$ and $Y$ are birational. 

If $\kappa(X_w) = 1$, then $\dim Y \ge \dim W+1$ by \Ref{Cnm}. Here $f$ will contract the general fiber $F_c$ of the Iitaka fibration of $X_w$.  Then $\dim Y \le \dim W + 1$ and we have a rational map from $Y$ to $W$ of relative dimension $1$.

General fibers of $f$ are hence $X_w$ and $F_c$ respectively. Then the general fiber of $f$ has a good minimal model. By \cite{Lai}, Theorem 4.4., $X$ has a good minimal model. By \cite{Lai}, Proposition 2.4. $X$ is a good minimal model as $K_X$ is nef. 
\end{proof}

\begin{proof}[Proof of \Ref{Fibr}]
  Denote the connected component of the identity of the Zariski closure of $\sigma(\pi_1(X))$ by $N$. Replacing $X$ by a finite \'etale cover we may assume $\sigma: \pi_1(X) \lra N$. Let $Rad(N)$ be the solvable radical of $N$. 

{\em 1. Case $N$ unsolvable.} Then $G := N/Rad(N)$ is a semisimple Lie group. By composition we obtain $\bar{\sigma}: \pi_1(X) \lra G$. The image is Zariski dense. By \cite{Ko}, 3.5. and 4.1., there exists $g: \xymatrix{X \ar@{..>}[r] & W}$ almost holomorphic to a smooth $W$ with the following property: if $Z \subset X$ passes through a very general point, then $Z$ will be contracted by $g$ if and only if
  \[\bar{\sigma}(Im(\pi_1(Z^{norm}) \lra \pi_1(X))) \subset G\]
is finite. Here $W$ is a smooth model of $Sh_{kern \bar{\sigma}}(X)$. Denote by $\pi_1: X_1 \lra X$ a resolution such that $g_1: X_1 \lra W$ is holomorphic. 

After replacing $X$ by a finite \'etale cover we may instead of finiteness assume $\pi_1^*\bar{\sigma} = g_1^*\rho$ for some {\em big} $\rho: \pi_1(W) \lra G$ (\cite{Zuo}, explanations after Theorem 1). We claim that $W$ is of general type. In the case $G$ almost simple this is \cite{Zuo}, Theorem 1. The general case easily follows by induction on the number of almost simple almost direct factors of $G$ and $C_{n,m}$ from \cite{Kaw85}. 

By \Ref{KE2} $\dim W < \dim X$. Denote by $X_w$ a very general fiber of $g$. By construction $\pi_1(X_w) \lra \pi_1(X) \lra N/Rad(N)$ is trivial, i.e.
  \begin{equation} \label{solv}
     \sigma|_{X_w}: \pi_1(X_w) \lra Rad(N) \subset N.
  \end{equation}

{\em 2. Case $N$ solvable.} Then $\sigma: \pi_1(X) \lra Rad(N)$. We allow $W$ to be a point and treat the two cases simultanously from now on.

\

\noindent {\bf Notation.} A smooth projective variety $F$ satifies condition $(F)$ if
\begin{enumerate} 
 \item there exists a map $\nu: F \lra X$, finite \'etale onto its smooth image
 \item $\sigma|_F: \pi_1(F) \lra Rad(N)$
 \item $N_{F/X} := \nu^*T_X/T_F$ admits a filtration into subbundles
  \begin{equation} \label{FiltN}
     0=N_0 \subset N_1 \subset N_2 \subset \cdots \subset N_r = N_{F/X}
  \end{equation}
such that each successive quotient $N_i/N_{i-1} \simeq \O_F^{\oplus r_i}$ is trivial. 
\end{enumerate}
Condition 3.) has to be ignored if $F \lra X$ is surjective. Note that this can happen only in case $N = Rad(N)$ solvable. In the unsolvable case, $X_w$ and every finite \'etale cover of $X_w$ satisfies $(F)$. The filtration in \Ref{FiltN} comes from the fact that we will have to consider a sequence of albanese fibrations (i.e., Shafarevich maps) in the end coming from $Rad(N)$.

\

Let $F$ satisfy $(F)$. By Lie's theorem, we find a basis of $\KC^n$ such that matrices in $Sl_n(\KC)$ representing elements in $Rad(N)$ have upper triangular form. For each $\gamma \in \pi_1(F)$ we find a unique matrix $A_{\gamma} \in Sl_n(\KC)$ of the form
   \[A_{\gamma} = \left(\begin{array}{cccc}
                         a_{11} & a_{12} & \cdots & a_{1n} \\
                          0 & a_{22} & \cdots & a_{2n} \\
                         \cdots & \cdots & \cdots & \cdots \\
                          0 & 0 & 0 & 1
                        \end{array}\right),\]
such that $\sigma(\gamma) = \PN(A_{\gamma})$. Let $\rho(\gamma) := A_{\gamma}$. This is a representation of $\pi_1(F)$ that lifts $\sigma$ to $Sl_n(\KC)$. Denote by $E$ the flat vector bundle on $F$ associated to $\rho$. Then $\PN(E) \simeq \PN(T_X)$, so 
 \[T_X \simeq E \otimes L\]
for some $L \in \Pic(F)$. The bundle $E$ admits a filtration
    \begin{equation} \label{FiltE}
      0 = E_0 \subset E_1 \subset \cdots \subset E_n = E
    \end{equation}
into flat vector bundles $E_i$ on $F$ of rank $rk E_i = i$ and flat line bundles $E_i/E_{i-1}$, $E/E_{n-1} \simeq \O_F$. Flat line bundles are nef and extensions of nef bundles are nef (\cite{DPS}). Hence each $E_i$ is numerically flat. As $\det E \equiv 0$ and $\det N_{F/X} \equiv 0$ we find
   \begin{equation} \label{LKF}
      L^{\otimes n} \equiv -K_F \equiv -\nu^*K_X.
   \end{equation}

\begin{lemma} \label{Fdescr}
  Let $F$ satisfy $(F)$. Then $K_F$ is nef, $0 \le \kappa(F) \le 1$ and $\nu(F) \le 1$.
   \begin{enumerate}
    \item If $K_F \equiv 0$, then $F$ is almost abelian.
    \item If $\kappa(F) = 1$, then $F$ is a good minimal model; the Iitaka fibration is almost abelian fibration onto a curve $C$.
   \end{enumerate}
\end{lemma}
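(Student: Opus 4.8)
The plan is to prove first the numerical assertions ($K_F$ nef, $\nu(F)\le 1$, abundance) and then the two structural cases 1.) and 2.). To begin, I would note that $K_F$ is nef: since $\nu:F\lra X$ is finite \'etale onto its smooth image $\nu(F)\subset X$, \Prop{RatCurve} gives that $K_{\nu(F)}$ is nef, and $K_F=\nu^*K_{\nu(F)}$ is then nef as well (in the surjective case $F$ is an \'etale cover of $X$ and $K_F=\nu^*K_X$ is nef). Next I would manufacture a sub--line bundle of $\Omega^1_F$ to feed into \Lem{LBDO}. Dualizing the flat quotient $E\twoheadrightarrow E/E_{n-1}\simeq\O_F$ from \Ref{FiltE} and tensoring by $L$, the surjection $\nu^*T_X=E\otimes L\twoheadrightarrow L$ dualizes to $L^*\hookrightarrow\nu^*\Omega^1_X$; composing with the canonical surjection $\nu^*\Omega^1_X\twoheadrightarrow\Omega^1_F$ gives a map $L^*\lra\Omega^1_F$. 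If it is non--zero it is injective, and since $nL^*\equiv K_F$ by \Ref{LKF}, \Lem{LBDO} yields $\kappa(F)\le\nu(F)\le1$. If instead this composition vanishes, then $L^*$ lands in the conormal sheaf $N_{F/X}^*$; using that $N_{F/X}$, hence $N_{F/X}^*$, carries a filtration with trivial quotients $\O_F^{\oplus r_i}$ (condition $(F)$, \Ref{FiltN}), I would extract a non--zero map $L^*\lra\O_F$, i.e.\ a non--zero section of $L$. As $K_F$ is nef and $L\equiv -K_F/n$ is anti--nef, an effective member of $|L|$ forces $L\equiv0$, hence $K_F\equiv0$ and $\nu(F)=0$. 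In either case $\nu(F)\le1$. To obtain $0\le\kappa(F)$ and in fact $\kappa(F)=\nu(F)$ I would invoke abundance in numerical dimension at most one: for $\nu(F)=0$ this is Nakayama's theorem that a nef canonical class with $\nu=0$ has $\kappa=0$, and for $\nu(F)=1$ it is the abundance statement $\kappa=\nu=1$. Thus $\kappa(F)\in\{0,1\}$.

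For 1.), suppose $K_F\equiv0$, equivalently $L\equiv0$ by \Ref{LKF}. The bundle $E$ of \Ref{FiltE} is flat, so all its Chern classes vanish in $H^\bullet(F,\KR)$; tensoring by the numerically trivial $L$ keeps $c_i(\nu^*T_X)=c_i(E\otimes L)=0$. The quotient $N_{F/X}$ has the filtration \Ref{FiltN} with trivial quotients, so $c_i(N_{F/X})=0$ as well. Applying the Whitney formula to $0\lra T_F\lra\nu^*T_X\lra N_{F/X}\lra0$, all $c_i(T_F)$ vanish numerically; in particular $c_1(F)\equiv0$ and $c_2(F)\equiv0$. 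Now the argument of \Theo{KE} applies verbatim: by Yau's theorem $F$ carries a Ricci--flat K\"ahler--Einstein metric, and the Chen--Ogiue inequality holds with equality since $c_1^2(F)\wedge\omega^{n-2}=c_2(F)\wedge\omega^{n-2}=0$, forcing constant --- hence, being Ricci--flat, zero --- holomorphic sectional curvature. Therefore $F$ is flat, i.e.\ almost abelian.

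For 2.), suppose $\kappa(F)=1$, so $\nu(F)=1$. The Iitaka fibration maps $F$ onto a curve $C$, and a general fiber $F_c$ has $\kappa(F_c)=0$. I would check that $F_c$ again satisfies condition $(F)$: the restriction $\nu|_{F_c}$ is finite \'etale onto its smooth image, $\sigma|_{F_c}$ still factors through $Rad(N)$, and from $0\lra N_{F_c/F}\lra N_{F_c/X}\lra N_{F/X}|_{F_c}\lra0$ with $N_{F_c/F}\simeq\O_{F_c}$ (fiber of a map to a curve) and $N_{F/X}|_{F_c}$ inheriting the trivial--quotient filtration \Ref{FiltN}, the bundle $N_{F_c/X}$ carries such a filtration too. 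The numerical part above then gives $K_{F_c}\equiv0$, and case 1.) applied to the lower--dimensional $F_c$ shows it is almost abelian; hence the Iitaka fibration is an almost abelian fibration over $C$. Finally, the general fibers being good minimal models, \cite{Lai} gives that $F$ has a good minimal model, and since $K_F$ is nef, $F$ is itself a good minimal model, exactly as in the proof of \Theo{Abundance}.

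The main obstacle is the abundance input for $\nu(F)=1$, namely the non--vanishing $\kappa(F)\ge1$; all the remaining content reduces to the Bogomolov--type \Lem{LBDO}, the vanishing of Chern classes, Yau's theorem, and \cite{Lai}. Two secondary points demand care: the degenerate case in which $L^*\lra\Omega^1_F$ is zero (handled by producing a section of $L$ and deducing $K_F\equiv0$), and the verification that general Iitaka fibers inherit condition $(F)$, which is what lets the case $\kappa(F)=1$ be bootstrapped from case 1.) in lower dimension.
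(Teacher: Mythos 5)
Your overall architecture runs parallel to the paper's: $K_F$ nef via \Ref{RatCurve}, the dichotomy on whether the composite $L^*\lra\nu^*\Omega^1_X\lra\Omega^1_F$ vanishes, \Lem{LBDO} giving $\nu(F)\le 1$, Chern class vanishing plus Yau and Chen--Ogiue for item 1, and passage to Iitaka fibers for item 2. But there is a genuine gap at the crux, namely the nonvanishing $0\le\kappa(F)$. You obtain it by invoking ``abundance in numerical dimension at most one''. For $\nu(F)=0$ this is harmless (indeed you do not even need Nakayama: a nef divisor with $\nu=0$ is numerically trivial, so this is just item 1). The whole weight falls on $\nu(F)=1$, and there the implication $\nu(F)=1\Rightarrow\kappa(F)=1$ is not a theorem but the abundance conjecture, open in dimension $\ge 4$; since $F$ can have any dimension up to $n$, it cannot be cited. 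You flag this yourself as ``the main obstacle'', but flagging it does not close it, and it is exactly the point the lemma's machinery exists to overcome. The paper proves $\kappa(F)\ge 0$ unconditionally and by a quite different mechanism: first $h^0(F,\nu^*\Omega^1_X)\neq 0$ (from the solvable representation in the surjective case, from the filtration \Ref{FiltN} of $N^*_{F/X}$ otherwise); then, twisting the flat filtration \Ref{FiltE} by $L^*$ and inducting on its subquotients, a section survives to give $h^0(F,P_1\otimes L^*)\neq 0$ for some numerically flat line bundle $P_1$, whence by \Ref{LKF} $h^0(F,P_2\otimes K_F^{\otimes k})\neq 0$ for some $k\in\KN$ and $P_2\in\Pic^0(F)$; finally the Chen--Hacon generic vanishing theorem (\cite{CH}, Theorem 3.2: the locus $V^k(K_F)\subset\Pic^0(F)$ is a finite union of torsion translates of abelian subvarieties) allows one to replace $P_2$ by a torsion bundle $P_3$, and a power of the resulting section yields $h^0(F,K_F^{\otimes km})\neq 0$. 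No abundance conjecture enters anywhere. Note also that the lemma claims only $0\le\kappa(F)\le 1$, not your stronger $\kappa(F)=\nu(F)$: the remaining case $\kappa(F)=0$ with $K_F\not\equiv 0$ is deliberately deferred to \Ref{KodNull}, where it is excluded by the Albanese induction, not by numerical abundance.

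The same gap propagates into your item 2: you deduce $K_{F_c}\equiv 0$ for the general Iitaka fiber from ``$\kappa(F_c)=0$ plus abundance'', which is again the open case $\nu=1$, $\kappa=0$. This secondary use, unlike the first, is easily repaired along the paper's lines: once $\kappa(F)=1$ and $\nu(F)\le 1$ are known, $\kappa\le\nu$ forces $\kappa(F)=\nu(F)=1$, so $K_F$ is abundant (\cite{MP}, IV, 2.5.) and hence semiample; then $K_F$ is, up to $\KQ$--linear equivalence, the pullback of an ample divisor on $C$, so $K_F|_{F_c}\equiv 0$ by adjunction, and your (correct) verification that $F_c$ inherits condition $(F)$ together with item 1 finishes the argument. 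The core nonvanishing $\kappa(F)\ge 0$, however, admits no such repair inside your proposal; it needs the Chen--Hacon input (or an equivalent substitute), which is missing.
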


\begin{proof}
   By \Ref{RatCurve}, $K_F$ is nef. We first show $h^0(F, \nu^*\Omega^1_X) \not= 0$. If $F \lra X$ is surjective, $N=Rad(N)$ is solvable. This gives a non trivial abelian representation of $\pi_1(F)$. Then $q(F) = h^0(F, \nu^*\Omega_X^1) \not= 0$. In the non--surjective case we have
   \begin{equation} \label{FinM}
     0 \lra N_{F/X}^* \lra \nu^*\Omega_X^1 \lra \Omega_F^1 \lra 0.
   \end{equation}
  By \Ref{FiltN} $h^0(F, N_{F/X}^*) \not= 0$. Hence $h^0(F, \nu^*\Omega^1_X) \not= 0$.

 Next consider
   \[0 \lra (E_i/E_{i-1})^*\otimes L^* \lra E_i^*\otimes L^* \lra E_{i-1}^* \otimes L^* \lra 0.\]
For $i=n$ we have $E_n^*\otimes L^* \simeq \nu^*\Omega_X^1$ and this bundle has a section. The filtration shows by induction: $h^0(F, P_1 \otimes L^*) \not= 0$ for some numerically flat $P_1 \in \Pic(F)$. Combined with \Ref{LKF} we find $h^0(F, P_2 \otimes K_F^{\otimes k}) \not= 0$ for some $k \in \KN$ and some $P_2 \in \Pic^0(F)$. By \cite{CH}, Theorem 3.2.
 \[V^m(K_F) = \{P \in \Pic^0(F) | h^0(F, K_F^{\otimes m} \otimes P)>0\},\]
if non--empty, is a finite union of torsion translates of abelian subvarieties of $\Pic^0(F)$. Then we find a torsion line bundle $P_3$ such that $H^0(F, P_3 \otimes K_F^{\otimes k}) \not= 0$. Then $\kappa(F) \ge 0$.

1.) Suppose $K_F \equiv 0$. As $\nu^*K_X \equiv K_F$ we find $c_i(\nu^*T_X) = 0$ for $i \ge 0$ by \Ref{ChernKl}. Then \Ref{FinM} and flatness of $N_{F/X}$ shows $c_i(F) = 0$ for $i \ge 0$. Then $F$ is almost abelian by \Ref{KE}.

2.) Suppose $K_F \not\equiv 0$. The above sequence for $i=n$ reads 
   \[0 \lra L^* \lra \nu^*\Omega_X^1 \simeq E^*\otimes L^* \lra E_{n-1}^*\otimes L^* \lra 0\]
By composition we obtain a map $L^* \lra \Omega_{F}^1$. If it is zero, then we obtain a non--zero section of $L \otimes N^*_{F/X}$. The filtration \Ref{FiltN} gives $h^0(F, L) \not= 0$. Then \Ref{LKF} implies $K_F \equiv 0$, a contradiction. Hence $L^* \lra \Omega_F^1$ is non-zero. By \Ref{LBDO} we have $\kappa(F) \le \nu(F) \le 1$. In the case $\kappa(F) = 1$, $K_F$ is abundant (\cite{MP}, IV, 2.5.). The general fiber $F_c$ of the Iitaka fibration $F \lra C$ satisfies $(F)$ with numerically trivial canonical divisor. Then $F_c$ is almost abelian by 1.).
\end{proof}

This next Lemma completes the proof of \Ref{Fibr}.\end{proof}

\begin{lemma} \label{KodNull}
   Let $F$ satisfy $(F)$. If $\kappa(F) = 0$, then $F$ is almost abelian.
\end{lemma}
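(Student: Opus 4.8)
The plan is to reduce the statement to $K_F \equiv 0$: once this is known, \Ref{Fdescr}.1 immediately gives that $F$ is almost abelian. By \Ref{Fdescr} we already know $K_F$ is nef with $\nu(F) \le 1$, so the entire problem is to exclude the case $\nu(F) = 1$, $\kappa(F) = 0$, an abundance-type failure that the special structure of $F$ must rule out. The first thing I would record is that $\nu^*\Omega_X^1 = E^* \otimes L^*$ is nef: $E^*$ is numerically flat (duals and extensions of the numerically flat $E_i$ of \Ref{FiltE}), $L^*$ is nef by \Ref{LKF}, and a tensor product of nef bundles is nef. Hence the quotient $\Omega_F^1$ in \Ref{FinM} is nef, which I will exploit in the descent at the end.

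The heart of the argument is an induction on the derived length of the image $\sigma|_F(\pi_1(F)) \subset Rad(N)$, mirroring the tower of albanese (Shafarevich) maps for which condition $(F)$ and the filtration \Ref{FiltN} were tailored. In the base case this image is finite; after a finite \'etale cover $\sigma|_F$ is trivial, so $E \simeq \O_F^{\oplus n}$ and $T_X|_F \simeq L^{\oplus n}$, and the method of \Ref{Finite} applies verbatim. Indeed $\nu(F) \le 1$ gives $c_1^2(F).H^{n-2} = 0$ by \Ref{LBDO}, whence $c_2(F).H^{n-2} = 0$ by \Ref{ChernKl}; then \cite{Simp1}, 9.7, together with \cite{Sieg} would force $K_F$ ample, i.e. $\kappa(F) = \dim F > 0$, contradicting $\kappa(F) = 0$. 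Therefore $\nu(F) = 0$ and $K_F \equiv 0$ in the base case.

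For the inductive step I assume the image is infinite. A solvable image that is infinite surjects, through its abelianization, onto an infinite abelian quotient of $\pi_1(F)$, so $q(F) > 0$ and the albanese map $a: F \lra A := \mathrm{Alb}(F)$ is nonconstant; since $\kappa(F) = 0$ it is moreover surjective by a theorem of Kawamata. I would then verify that a very general fibre $F'$ again satisfies $(F)$: the inclusion $F' \hookrightarrow F$ pushes $\sigma|_{F'}$ into the subgroup cut out by the abelianization picked off by $a$, hence into a subgroup of strictly smaller derived length, while the filtration \Ref{FiltN} restricts to one on $N_{F'/X}$. By easy addition ($C_{n,m}$, \cite{Kaw85}) we get $\kappa(F') = 0$, so by induction $F'$ is almost abelian and $K_F|_{F'} = K_{F'} \equiv 0$.

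It remains to climb back. Since $K_F$ is nef and numerically trivial on the fibres of $a$, it descends to a nef $\KR$--class $\bar D$ on the abelian variety $A$ with $\nu(\bar D) = \nu(K_F)$. If $\nu(F) = 1$ then $\bar D \not\equiv 0$; but a nef line bundle on an abelian variety is semiample, so $\kappa(A, \bar D) = \nu(\bar D) \ge 1$. Transporting this growth of sections back to $F$, using $K_F \equiv a^*\bar D$ together with the description of $V^m(K_F)$ as a finite union of torsion translates of subtori in \cite{CH}, Theorem 3.2, to absorb the numerically trivial discrepancy, yields $\kappa(F) \ge 1$, contradicting $\kappa(F) = 0$. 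Hence $\nu(F) = 0$, $K_F \equiv 0$, and $F$ is almost abelian by \Ref{Fdescr}.1. I expect the two structural descents to be the main obstacle: confirming that the very general albanese fibre $F'$ inherits condition $(F)$ with strictly smaller derived length (precisely the bookkeeping that the auxiliary filtration \Ref{FiltN} is designed to support), and the final passage from the numerical identity $K_F \equiv a^*\bar D$ to an honest growth of pluricanonical sections, where the $\Pic^0$--twist must be controlled through the Chen--Hacon generic vanishing machinery.
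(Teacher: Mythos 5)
Your induction has the same skeleton as the paper's proof (climb down the derived series of $Rad(N)$, using the albanese fibration of $F$ and restricting to a very general fibre, which again satisfies $(F)$ with monodromy pushed one step deeper), but the two steps that actually carry your argument do not hold up. First, the base case. You claim the Simpson--Siegel argument of \Ref{Finite} applies ``verbatim'' once $\sigma|_F$ is trivial. It does not: in \Ref{Finite} that argument is applied to a manifold whose \emph{own} tangent bundle is projectively trivial, $T_X \simeq L^{\oplus n}$, and this is exactly what makes \cite{Simp1}, 9.7 available. For $F$ one only knows $\nu^*T_X \simeq L^{\oplus n}$; the bundle $T_F$ is merely a subbundle of $L^{\oplus n}$ with numerically flat quotient $N_{F/X}$, and nothing in condition $(F)$ makes $T_F$ projectively flat or a direct sum of line bundles, so Simpson's uniformization theorem cannot be invoked on $F$. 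The paper's treatment of this case is entirely different and elementary: since $\sigma$ is infinite on $X$ but trivial on $F$, the map $\nu$ cannot be surjective; the filtration \Ref{FiltN} gives a section of $N^*_{F/X}$, hence every copy of $L^*$ in $\nu^*\Omega^1_X \simeq (L^*)^{\oplus n}$ has a section, while $h^0(N^*_{F/X}) \le \rk\, N_{F/X}$; the conormal sequence then yields $q(F) \ge \dim F$, and \cite{KawAb} together with $\kappa(F)=0$ makes $F \lra Alb(F)$ an isomorphism. (A smaller slip in your inductive step: an infinite solvable group can have finite abelianization --- e.g.\ the semidirect product of $\KZ[\omega]$ with $\KZ/3$ inside the affine group $z \mapsto az+b$ of $\KC$ --- so ``infinite solvable image'' does not by itself give $q(F)>0$.)

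Second, and more seriously, the ``climb back''. The assertion that $K_F$, being nef and numerically trivial on the very general albanese fibres, ``descends to a nef $\KR$--class $\bar D$ on $Alb(F)$ with $\nu(\bar D)=\nu(K_F)$'' is unjustified, and it is the crux of the whole lemma: a nef line bundle that is numerically trivial on the general fibres of a fibration need not be a numerical pullback, and over a base of dimension $\ge 2$ one must in addition exclude Poincar\'e--type classes and control vertical divisors over the non-general locus; your proposal addresses none of this, and this descent is precisely the abundance-type statement one is trying to prove. The next step is also false as stated: a nef line bundle on an abelian variety need not be semiample --- on $E_1 \times E_2$ the bundle $pr_1^*(\mbox{ample}) \otimes pr_2^*Q$ with $Q \in \Pic^0(E_2)$ non-torsion is nef with $\kappa = -\infty$; semiampleness holds only up to a $\Pic^0$--twist, and removing that twist is where \cite{CH} has to do real work rather than ``absorb a discrepancy''. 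The paper avoids both problems by changing what is transported in the induction: its goal is the property ``$F$ is a good minimal model'', which ascends from the albanese fibres to $F$ by \cite{Lai}, Theorem 4.2 and Proposition 2.4 (using that $K_F$ is nef); once $F$ is a good minimal model, $\kappa(F)=0$ forces $mK_F \sim 0$, hence $K_F \equiv 0$, and \Ref{Fdescr} concludes. Replacing your numerical descent by Lai's good-minimal-model machinery is the key idea your proof is missing.
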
 

\begin{proof}
   By \Ref{Fdescr} we have to prove that $F$ is a good minimal model. Let $R_1 := Rad(N)$, $R_2 := [R_1, R_1]$, $R_3 := [R_2, R_2]$, etc. Let $t \in \KN$ be minimal such that $R_t = \{id\}$. We have $t=1$ iff $Rad(N) = \{id\}$.

Choose $i$ minimal such that $\sigma(Im(\pi_1(F) \lra \pi_1(X))) \subset R_i$. If $i < t$, then we have a non trivial abelian representation $\pi_1(F) \lra R_i/R_{i+1}$. Then $q(F) > 0$. By \cite{KawAb}, $\kappa(F)=0$ implies that $alb_F: F \lra Alb(F)$ is a fibration, i.e., surjective with connected fibers. A general fiber $F_a$ again satisfies $(F)$. By \cite{Lai}, 4.2., it suffices to show that $F_a$ has a good minimal model. Then $F$ has a good minimal model and $K_F$ nef implies $F$ is a good minimal model (\cite{Lai}, 2.4.).

{\em Claim.} $F_a$ admits a finite \'etale cover $F'_a \lra F_a$ with $\kappa(F'_a) = 0$ and  $\sigma|_{F'_a}: \pi_1(F') \lra R_{i+1}$.

By \Ref{Fdescr}, $0 \le \kappa(F_a) \le 1$. Assume $\kappa(F_a) = 1$. Then $F_a$ is a good minimal model by \Ref{Fdescr}. Then $C_{n,m}$ is true (\cite{Kaw85}), so $0 = \kappa(F) \ge \kappa(F_a) + \kappa(Alb(F)) = 1$, a contradiction. Therefore $\kappa(F_a) = 0$. 

Let $H := Im(\pi_1(F_a) \lra \pi_1(F))$. As $F_a$ is contracted by $alb_F$,
   \[H \cap [\pi_1(F), \pi_1(F)] \subset H\]
is of finite index. Then $F_a$ admits a finite \'etale cover $F'_a$ such that $Im(\pi_1(F'_a) \lra \pi_1(F)) \subset  [\pi_1(F), \pi_1(F)]$. Then $\sigma|_{F'_a}: \pi_1(F'_a) \lra R_{i+1}$. This proves the claim.

Consider $F' := F'_a$. In the case  $i+1 < t$ proceed as above: consider the albanese of $F'$ and a general fiber. By induction we see that it suffices to prove \Ref{KodNull} under the additional assumption $\sigma|_F$ trivial.

As $\sigma$ is infinite on $X$ but trivial on $F$, $\nu: F \lra X$ will not be surjective. The conormal bundle $N^*_{F/X}$ has a section. Then $h^0(F, \nu^*\Omega^1_X) \not= 0$. Triviality of $\sigma$ implies $\nu^*T_X \simeq L^{\oplus n}$. Then $h^0(F, \nu^*\Omega^1_X) \ge n$. Filtration \Ref{FiltN} shows $h^0(N^*_{F/X}) \le rk N_{F/X}$. Then $q(F)= h^0(F, \Omega_F^1) \ge \dim F$. Then $F \lra Alb(F)$ is an isomorphism by \cite{KawAb}.
\end{proof}

\section{Abelian group schemes}
\setcounter{equation}{0}

A map between projective manifolds $f: X \lra Y$ is a {\em smooth abelian group scheme}, if $f$ is smooth, every fiber is an abelian variety and if $f$ admits a smooth section.

\begin{theorem} \label{AbSch}
  Let $X$ satisfy $(*)$ and assume $\sigma$ is infinite. Then there exists a finite \'etale cover $X' \lra X$ such that $X'$ is a smooth abelian group scheme over a base $Y$ such that $K_Y$ is ample ($\dim X > \dim Y > 0$).
\end{theorem}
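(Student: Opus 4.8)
The plan is to promote the Iitaka fibration $f : X \lra Y$ of \Ref{Abundance} to a smooth abelian group scheme after a finite \'etale cover, the decisive external input being the structure theorem of Koll\'ar \cite{Ko}. By \Ref{Abundance}, $X$ is a good minimal model with $0 < \dim Y = \kappa(X) < n$, so $K_X$ is semiample, $f$ is a genuine morphism, and its general fibre is almost abelian with numerically trivial canonical class. Throughout I may replace $X$ by any finite \'etale cover by \Ref{PullBack}.

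First I would show that $f$ is smooth. By \Ref{RatCurve} the manifold $X$ carries no rational curve, hence neither does any fibre of $f$. But a fibration whose general fibre is almost abelian acquires a rational curve in each of its degenerate fibres -- this is already visible in Kodaira's list of the singular fibres of an elliptic fibration and persists for degenerations of higher dimensional abelian varieties. The absence of rational curves therefore forbids degenerate fibres, so $f$ is smooth; in particular $Y$ is smooth, and every fibre, being a smooth deformation of the almost abelian general fibre, is itself almost abelian.

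Next comes the heart of the matter, where the finite \'etale cover is used and where \cite{Ko} does the real work: making the fibres genuinely abelian and producing a section. Each fibre is a finite free quotient $B_y/G_y$ of an abelian variety, and both the quotient data $\{G_y\}$ and the monodromy of the family are controlled by $\pi_1$ and are finite over $Y$. Koll\'ar's theorem supplies a single finite \'etale cover $X' \lra X$, lying over a finite cover $Y' \lra Y$, that simultaneously unwinds all the $G_y$ -- so that every fibre of $f' : X' \lra Y'$ becomes an abelian variety and $X' \lra Y'$ is a torsor under the abelian scheme $\mathrm{Alb}^0(X'/Y')$ -- and trivialises the torsor class, thereby producing a section. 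A smooth proper morphism with abelian fibres that admits a section is a smooth abelian group scheme, which is the asserted structure. I expect this globalisation to be the main obstacle: replacing every $B_y/G_y$ by $B_y$ and splitting the torsor must be carried out compatibly over all of $Y$ by one finite \'etale cover, and it is exactly here that Koll\'ar's result is indispensable.

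It remains to check that $K_{Y'}$ is ample. The base $Y'$ is smooth because $f'$ is smooth, and it is of general type by \Ref{Fibr}. If $Y'$ carried a rational curve $R$, then the abelian scheme $f'^{-1}(R) \lra R$ together with its zero section would embed $R$ as a rational curve in $X'$, contradicting \Ref{RatCurve}; hence $Y'$ has no rational curve. By the cone theorem $K_{Y'}$ is then nef, and being big it is semiample and defines the morphism onto the canonical model. The exceptional locus of this morphism, if non-empty, is covered by rational curves, so the absence of rational curves forces it to be an isomorphism and $K_{Y'}$ to be ample. Finally $\dim X' > \dim Y' > 0$ because $0 < \kappa(X) < n$, which completes the argument.
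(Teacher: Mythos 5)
Your proposal has two genuine gaps, and the second one is fatal: it skips exactly the step where all the work in the paper's proof lies.

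First, your smoothness argument rests on the claim that a fibration with almost abelian general fibre ``acquires a rational curve in each of its degenerate fibres.'' This is false as stated: already for elliptic surfaces there are multiple fibres of type $mI_0$, whose support is a smooth elliptic curve and which contain no rational curve at all, and analogous multiple fibres with smooth abelian reduction exist in higher dimensions. So the absence of rational curves does not by itself force $f$ to be smooth. The paper gets smoothness only \emph{after} invoking Koll\'ar, in the opposite order from yours: once one has a birational abelian group scheme $G \lra S$ over $X \lra Y$, the birational morphism $G \lra X$ (holomorphic by \Ref{RatCurve}) forces every fibre of $f$ to be reduced, the section forces irreducibility and smoothness of $Y$, and only then does the absence of rational curves give normality of fibres and smoothness of $f$.

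Second, and decisively: Koll\'ar's theorem (\cite{Ko}, 6.3) that some finite \'etale cover of $X$ is birational to an abelian group scheme has a hypothesis --- that $X$ has \emph{generically large fundamental group on the general fibre} $X_y$, i.e.\ that for every positive dimensional $Z \subset X_y$ through a very general point the image of $\pi_1(Z^{norm}) \lra \pi_1(X)$ is infinite. You treat this as automatic (``the quotient data and the monodromy are controlled by $\pi_1$ and are finite over $Y$''), but it is not: a subtorus $B_y$ of the abelian cover $A_y$ of a fibre can perfectly well have finite image in $\pi_1(X)$, even though $\pi_1(A_y)$ itself is infinite, and in that case Koll\'ar's result does not produce the group scheme you need. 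Ruling this out is Proposition \Ref{GenLFG} in the paper, and it is the heart of the whole proof: assuming such a $B_y$ exists, one passes to the relative Shafarevich map, shows by a pullback-of-the-tangent-sequence argument that the Kodaira--Spencer map of the resulting family vanishes, concludes isotriviality, obtains after base change a generically finite map $B \times D \lra X$ from a product of an abelian variety and a curve, and then uses condition $(*)$ directly (Lemma \Ref{Prod}) to force $K_X \equiv 0$, contradicting infiniteness of $\sigma$ via \Ref{KE2} and \Ref{Finite}. None of this appears in your proposal, so the application of \cite{Ko} is unjustified; everything downstream (the section, the abelian structure, and in your ordering even smoothness) depends on it. Your final paragraph on ampleness of $K_{Y'}$ is essentially the paper's argument and is fine, but it cannot rescue the missing hypothesis.
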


\begin{proof}
  By \Ref{Abundance}, $X$ is a good minimal model and the Iitaka fibration $f: X \lra Y$ is an almost abelian fibration onto $Y$ normal, $0 < \dim Y = \kappa(X) < \dim X$. By \cite{Kaw}, Theorem 2, $f$ is equidimensional. Denote a general fiber by $X_y$. Denote by $\nu: A_y \lra X_y$ the finite \'etale cover from an abelian $A_y$. 

Let $Z \subset A_{y}$ be an irreducible positive dimensional subvariety passing through a very general point. If
   \begin{equation} \label{ImFundGr}
     Im(\pi_1(Z^{norm}) \lra \pi_1(X))
   \end{equation}
is infinite for every such $Z$, then $X$ has generically large fundamental group on $X_y$ in the sense of Koll\'ar (\cite{Ko}, 6.1.). Then by \cite{Ko}, 6.3., some finite \'etale cover of $X$ is birational to an abelian group scheme $G \lra S$.

\begin{lemma}
  If $X$ as above has generically large fundamental group on $X_y$, then \Ref{AbSch} is true.
\end{lemma}

\begin{proof}
 By Koll\'ar's result we can replace $X$ by a finite \'etale cover to obtain
  \[\xymatrix{G \ar[d] \ar@{..>}[r] & X \ar[d] \\
               S \ar@{..>}[r]  & Y.}\]
The upper rational map must be holomorphic, as $X$ is free of rational curves. The map $G \lra S$ has a smooth section. Then the lower map must be holomorphic, too. As $G \lra X$ is birational and $G \lra S$ smooth, every fiber of $X \lra Y$ is reduced. Denote by $Y' \subset X$ the image of the section of $G \lra S$. Then $Y'$ intersects every fiber transversally in a single point. Then $Y'$ and $Y$ are smooth. Fibers of $X \lra Y$ are irreducible. As there are no rational curves, every fiber is normal. Then $X \lra Y$ is smooth. 

By \cite{Ko}, (5.9.) we have $\kappa(X) = \kappa(Y)$. Then $Y$ is of general type. As $X \lra Y$ admits a section, $K_Y$ is nef. The base point theorem implies $|dK_Y|$ is spanned, $d \gg 0$.  Exceptional fibers of the Iitaka fibration of $Y$ are again covered by rational curves. Then \Ref{RatCurve} and the existence of a section implies $K_Y$ is ample.
\end{proof}

The next Proposition finishes the proof of \Ref{AbSch}. \end{proof}

\begin{proposition} \label{GenLFG}
  $X$ as in \Ref{AbSch} has generically large fundamental group on the general fiber $X_y$.
\end{proposition}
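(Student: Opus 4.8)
The plan is to reduce the statement to abelian subvarieties and then to detect infiniteness of the fundamental group image by two complementary invariants of $\pi_1(X)$. First I would make the following reduction. Let $Z\subset A_y$ be irreducible of positive dimension through a very general point, and let $B\subseteq A_y$ be the abelian subvariety generated by $Z$. Applying the universal property of the Albanese to $Z^{norm}\lra A_y$, the induced $Alb(Z^{norm})\lra A_y$ has image $B$, and $\mathrm{Im}(\pi_1(Z^{norm})\lra\pi_1(A_y))$ is of finite index in $\pi_1(B)$. Since $B$ is positive dimensional, it therefore suffices to prove that $\pi_1(B)\lra\pi_1(X)$ has infinite image for \emph{every} positive dimensional abelian subvariety $B\subseteq A_y$.

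Next I would set up the two detectors. As $A_y$ is a general fibre of $f$, both $T_{A_y}$ and $\nu^*N_{X_y/X}$ are trivial, so $\nu^*T_X$ is an iterated extension of trivial line bundles refining $0\lra T_{A_y}\lra\nu^*T_X\lra\nu^*N_{X_y/X}\lra0$. Hence $\sigma|_{\pi_1(A_y)}$ is unipotent, and its image, being torsion free, is infinite as soon as it is nontrivial. On the other hand the Albanese yields $\pi_1(X)\lra H_1(X,\KZ)=\pi_1(Alb(X))$. If $B$ is \emph{not} contracted by $alb_X$, then $B\lra Alb(X)$ is a nonconstant homomorphism of abelian varieties, so already $\pi_1(B)\lra H_1(X,\KZ)$ is infinite and we are done. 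We are thus reduced to the case that $B$ is contracted by the Albanese, where I would try to show instead that $\sigma|_{\pi_1(B)}$ is nontrivial.

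The heart of the matter, and the step I expect to be the main obstacle, is to exclude a positive dimensional $B$ invisible to both detectors. If $\sigma|_B$ were trivial then $\PN(T_X)|_B$ is trivial, so $T_X|_B\simeq M^{\oplus n}$ for some $M$; since $K_X|_{X_y}\equiv0$ gives $M\in\Pic^0(B)$, the inclusion $T_B\hookrightarrow T_X|_B$ forces $M\simeq\O_B$, so $T_X|_B$ is trivial. To rule out that such a $B$ is simultaneously contracted by $alb_X$, I would exploit that $alb_X$ contracts $B$ exactly when $H_1(B,\KZ)\subset H_1(A_y,\KZ)$ lies in the non-invariant part of the monodromy of $R^1f_*\KZ$, whereas the monodromy of $\sigma$ along a fibre loop $\gamma$ is the unipotent map obtained by pairing $\gamma$ with the Kodaira--Spencer class of $f$, i.e.\ the extension class of the sequence above. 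By the theorem of the fixed part the Kodaira--Spencer form vanishes on invariant cycles and is nonzero on the moving ones, so non-invariance of $H_1(B,\KZ)$ should force $\sigma|_B$ nontrivial, hence infinite. In Koll\'ar's language (\cite{Ko}) the same point reads: a $B$ invisible to both detectors and persisting for general $y$ would make the Shafarevich map contract every fibre $A_y$, so $\sigma$ descends to $Y$ and all of $H_1(A_y,\KZ)$ becomes monodromy invariant; after a finite \'etale base change $X$ would then be a product $A\times Y$, on which $\pi_1(A_y)\lra\pi_1(X)$ is injective, contradicting the assumed finiteness. Making the implication ``$alb_X$--contracted $\Rightarrow\sigma|_B$ nontrivial'' precise, by tying the fibre monodromy of $\sigma$ to the Kodaira--Spencer class and the variation of Hodge structure of $R^1f_*\KZ$, is the technical crux; alternatively one argues directly, from the absence of rational curves and the fact that $X$ is a good minimal model (\Ref{Abundance}), that $\pi_1(A_y)\lra\pi_1(X)$ has no positive dimensional kernel.
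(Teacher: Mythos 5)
Your opening reduction is sound and in fact agrees with the paper's first step: the maximal $Z \subset A_y$ with finite image are (translates of) subtori $B$, and your observation that triviality of $\sigma|_{\pi_1(B)}$ forces $T_X|_B \simeq \O_B^{\oplus n}$ is exactly the computation the paper performs on an \'etale cover $B_u$ of such a subtorus. But from there on the proof has a genuine gap, which you yourself flag as ``the technical crux'': you never produce a working mechanism turning ``$B$ is invisible to both detectors'' into a contradiction, and each of your sketches for that step fails. First, the inference ``iterated extension of trivial line bundles $\Rightarrow$ $\sigma|_{\pi_1(A_y)}$ unipotent'' is false: a holomorphic bundle does not determine its flat structure, and already a non-unitary character $\chi_l(\lambda)=\exp(l(\lambda))$ of the lattice of an abelian variety gives a non-unipotent flat structure whose underlying holomorphic bundle is trivial (the function $e^{l(z)}$ trivializes it); so ``nontrivial $\Rightarrow$ infinite'' is not established. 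Second, the asserted identity between the fibre monodromy of $\sigma$ and ``pairing with the Kodaira--Spencer class of $f$'' is unsupported: $\sigma$ arises from Theorem \Ref{NakThA} (Simpson-type theory) and has no a priori relation to the extension class of $0 \to T_{X_y} \to T_X|_{X_y} \to N_{X_y/X} \to 0$, and the appeal to the theorem of the fixed part (``Kodaira--Spencer vanishes on invariant cycles'') is not a meaningful statement about the Kodaira--Spencer map. Third, the Koll\'ar-language variant conflates contracting $B$ with contracting the whole fibre $A_y$: when $0<\dim B<\dim A_y$ the relative Shafarevich map contracts only the $B$-directions and yields an intermediate fibration $X \dashrightarrow T \to Y$, so neither ``$\sigma$ descends to $Y$'' nor a product structure $A\times Y$ follows. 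Finally, the ``alternative'' in your last sentence --- that $\pi_1(A_y)\to\pi_1(X)$ kills no positive-dimensional subvariety --- is circular: it restates the proposition.

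The missing idea is how the paper actually closes the argument, and it uses condition $(*)$ once more in an essential, quantitative way rather than through monodromy. The paper packages the bad subtori into the relative Shafarevich map $g: X_1 \to T$ over $Y$, restricts to a family $F \to C$ over a general curve $C \subset T$, and proves that its Kodaira--Spencer map vanishes \emph{holomorphically}: on an \'etale cover $B_u$ of a fibre with $Im(\pi_1(B_u)\to\pi_1(X))=\{id\}$, the bundles $T_X|_{B_u}$, the normal bundle, and $T_F|_{B_u}$ all trivialize (your own computation, plus the fact that a globally generated bundle with trivial determinant is trivial), so the relative tangent sequence splits. Isotriviality then produces, after a base change, a generically finite holomorphic map $\beta: B \times D \to X$ (holomorphic by \Ref{RatCurve}), and Lemma \Ref{Prod} --- semistability of $\beta^*T_X$ on a general curve $\tilde{D}$, projected generically onto the trivial factor $\O_{\tilde D}^{\oplus \dim B}$ --- forces $\beta^*K_X \equiv 0$. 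This gives $K_X \equiv 0$ on general complete intersection curves, hence $K_X\equiv 0$, so $X$ is almost abelian by \Ref{KE2} and $\sigma$ is finite: the desired contradiction. Without a substitute for this $(*)$-based step, the two detectors alone cannot conclude; your Hodge-theoretic sketch would need to be replaced by (or made rigorous into) an argument of this kind.
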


\begin{proof}
Assume \Ref{ImFundGr} is finite for some $Z$. The largest dimensional such $Z \subset A_y$ are subtori $B_y$ of $A_y$ and translates. Indeed, if $H = kern(\pi_1(A_y) \lra \pi_1(X_1))$, then these are the fibers of $\xymatrix{A_y \ar@{..>}[r] & Sh_H(A_y)}$. Let $\xymatrix{X \ar@{..>}[r] & T}$ be the relative Shafarevich map for $X \lra Y$ (\cite{Ko}, 3.10.). After blowing up $X$ we obtain a diagram
 \[\xymatrix{ X_1 \ar[r]^{\pi} \ar[d]^{g} & X \ar[d]^f \\
              T \ar[r]^{\tau} & Y}\]
where the general fiber of $g$ is covered by the abelian variety $B_y$ from above. Let $\tilde{C} = H_1 \cap \cdots \cap H_{n-1} \subset X_1$ be a general curve where $H_i \in |mH|$, $m \gg 0$, for some ample $H \in \Pic(X_1)$. Our aim is to prove
  \begin{equation} \label{numtriv}
    \pi^*K_X.(mH)^{n-1} = \pi^*K_X.\tilde{C} = 0.
  \end{equation}
As $K_X$ is nef, this implies $K_X \equiv 0$. Then $X$ is almost abelian by \Ref{KE2} and $\sigma$ is finite. A contradiction. 

Let $C = g(\tilde{C}) \subset T$ and $F := g^{-1}(C)$. We may assume $F$ and $C$ are smooth, $g_F=g|_F: F \lra C$ is an almost abelian fibration, $\tilde{C}$ is a ramified multisection. The map $\pi_F=\pi|_F: F \lra X$ is an embedding near the general fiber of $g_F$. If we can show $\pi_F^*K_X \equiv 0$, then \Ref{numtriv} is true.

Let $\xymatrix{F' \ar@{..>}[r] & F}$ be a dominant rational and generically finite map from some smooth projective $F'$. The induced map $\pi_{F'}: \xymatrix{F' \ar@{..>}[r] & X}$ is holomorphic \Ref{RatCurve}. A resolution diagram shows that $\pi_{F'}^*K_X \equiv 0$ implies $\pi_F^*K_X \equiv 0$. We may therefore replace $F$ by $F'$. For example, after replacing $F$ by a semistable reduction, we may assume $g_F: F \lra C$ semistable. Our aim is to find a much simpler model:

\begin{lemma} \label{Prod}
  Let $\beta: B \times D \lra X$ be a generically finite map where $B$ is an abelian variety of dimension $\dim B > 0$ and $D$ is a curve. Then $\beta^*K_X \equiv 0$.
\end{lemma}

\begin{proof}
     Let $\tilde{D}$ be a general curve in $B \times D$, i.e., the intersection of $\dim(B)$ general hyperplanes. The map
  \[\beta^*\Omega_X^1|_{\tilde{D}} \lra \Omega^1_{B \times D}|_{\tilde{D}} \simeq pr_1^*\O_B^{\dim B} \oplus pr_2^*K_D|_{\tilde{D}}\]
is generically surjective. Projection to the first summand yields a generically surjective map $\alpha^*\Omega_X^1|_{\tilde{D}} \lra \O_{\tilde{D}}^{\oplus \dim B}$. Denote the image by $E$. Then $E$ is a rank $\dim B >0$ vector bundle on $\tilde{D}$. The cokernel of $E \hookrightarrow \O_{\tilde{D}}^{\oplus \dim B}$ is torsion on $\tilde{D}$, implying $\deg E \le 0$. By condition $(*)$
  \[\frac{\beta^*K_X.\tilde{D}}{n} = \frac{\deg \beta^*\Omega_X^1|_{\tilde{D}}}{n} \le \frac{\deg E}{\dim B} \le 0.\]
Then $\beta^*K_X.\tilde{D} \le 0$. But $K_X$ is nef. Then $\beta^*K_X.\tilde{D} = 0$.
\end{proof}

\

We continue the proof of \Ref{GenLFG}. Choose $U \subset C$ Zariski open and dense such that $g$ is smooth over $U$ and such that $F^0 = g^{-1}(U)$ embeds into $X$ via $\pi_F$. Consider the relative tangent sequence
   \begin{equation} \label{RelTangSeq}
     0 \lra T_{F^0/U} \lra T_{F^0} \lra g_F^*T_U \lra 0
   \end{equation}

{\em Claim.} The Kodaira Spencer map $\theta: T_U \lra R^1g_{F,*}T_{F^0/U}$ is zero. 

It suffices to show that \Ref{RelTangSeq} splits holomorphically when restricted to a fiber $F_u$ of $F^0 \lra U$. Restricted to $F_u$ we have
   \begin{equation} \label{TangSeq}
    0 \lra T_{F_u} \lra T_F|_{F_u} \lra \O_{F_u} \lra 0.
  \end{equation}
We find an abelian variety $B_u$ and a finite \'etale map 
   \begin{equation} \label{B}
   \mu: B_u \lra F_u, \quad \mbox{s.t.} \;\; Im(\pi_1(B_u) \lra \pi_1(X)) = \{id\}.
  \end{equation}
The pull back of \Ref{TangSeq} to $B_u$ gives 
  \[0 \lra T_{B_u} \simeq \O_{B_u}^{\oplus \dim B_u} \lra \mu^*T_F \lra \O_{B_u} \lra 0\]
and it suffices to show that this sequence splits holomorphically. This means we have to shows $\mu^*T_F \simeq \O_{B_u}^{\oplus \dim F}$. The last sequence shows $\det \mu^*T_F \simeq \O_{B_u}$.

Near $F_u$, $\pi_F$ is an embedding. Then we have a sequence of vector bundles
  \begin{equation} \label{NSurdr}
     0 \lra \mu^*T_F \lra \mu^*\pi_F^*T_X \lra \mu^*N \lra 0.
  \end{equation}
By \Ref{B}, $\mu^*\pi_F^*T_X$ is projectively trivial, hence $\mu^*\pi_F^*T_X \simeq L^{\oplus n}$ for some $L \in \Pic(B_u)$. As $\mu^*\pi_F^*K_X \equiv 0$ by the adjunction formula, $L \equiv 0$. The inclusion $T_{B_u} \hookrightarrow 
\mu^*\pi_F^*T_X$ shows that $L$ has a section. Then $L \simeq \O_{B_u}$ and $\mu^*\pi_F^*T_X \simeq \O_{B_u}^{\oplus n}$.

Then \Ref{NSurdr} implies $\mu^*N$ is globally generated and $\det \mu^*N \simeq \det \mu^*T_F \simeq \O_{B_u}$. A globally generated vector bundle $E$ with $c_1(E)=0$ is trivial (indeed, $E$ and $E^*$ are nef. By \cite{DPS}, 1.16., for any $s \in H^0(E)$ we have $Z(s) = \emptyset$. This gives an exact sequence of vector bundles, now use \cite{DPS}, 1.15., and induction). Hence $\mu^*\pi^*N \simeq \O_{B_u}^{\oplus n - \dim F}$. Dually we obtain in the same way $\mu^*T_{F} \simeq  \O_{B_u}^{\dim F}$. This proves our claim.

The vanishing of the Kodaira Spencer implies $g$ is isotrivial over $U$, i.e., any two fibers are isomorphic. Then there exists a ramified base change $D \lra C$, such that $F \times_C D$ is birational to $B \times D$ where $B$ is some smooth abelian variety. The induced rational map $\beta: \xymatrix{B \times D \ar@{..>}[r] & X}$ is holomorphic by \Ref{RatCurve} and generically finite. By \Ref{Prod}, $\beta^*K_X \equiv 0$. This proves \Ref{numtriv}.
\end{proof}

\section{Proof of the Main Theorem}
\setcounter{equation}{0}

\begin{theorem} \label{Arakelov}
  Let $X$ satisfy $(*)$. Then $X$ is almost abelian.
\end{theorem}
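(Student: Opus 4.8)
The plan is to dispose of the infinite case, since the finite case is already \Prop{Finite}: if $\sigma$ is finite, $X$ is almost abelian. So assume $\sigma$ is infinite and aim for a contradiction. By \Theo{AbSch} we may pass to a finite \'etale cover and assume that $f: X \lra Y$ is a smooth abelian group scheme over a base $Y$ with $K_Y$ ample and $0 < \dim Y < n$. Let $g = n - \dim Y$ be the relative dimension, let $e: Y \lra X$ be the zero section, and let $\E = e^*\Omega^1_{X/Y} \simeq f_*\Omega^1_{X/Y}$ be the rank $g$ Hodge bundle. For an abelian scheme $\Omega^1_{X/Y} \simeq f^*\E$, so the relative tangent sequence reads
\[0 \lra f^*\E^* \lra T_X \lra f^*T_Y \lra 0.\]
I would derive the contradiction by playing two estimates for the $H$--slope of $\E$ against each other: a lower bound forced by condition $(*)$, and the Arakelov upper bound of Viehweg and Zuo.

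For the lower bound, fix an ample $H \in \Pic(Y)$ and let $C \subset Y$ be a general complete intersection curve cut out by members of $|mH|$; lift it to $\tilde C = e(C) \subset X$, so that $f|_{\tilde C}$ is an isomorphism. Restricting the relative tangent sequence to $\tilde C$ exhibits the subsheaf $\E^*|_C \subset T_X|_{\tilde C}$ of rank $g$ and degree $-\deg\E|_C$, with quotient $T_Y|_C$ of rank $\dim Y$ and degree $-K_Y.C$. Condition $(*)$ forces $T_X|_{\tilde C}$ to be semistable, so comparing the slope of this subsheaf with the total slope gives, after a short computation, $\deg\E|_C \ge \frac{g}{\dim Y}K_Y.C$. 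Dividing by $m^{\dim Y - 1}$ and then by $\rk\E = g$, and using the identity $n - g = \dim Y$, this is precisely
\[\mu_H(\E) \ge \frac{1}{\dim Y}K_Y.H^{\dim Y - 1} = \mu_H(\Omega^1_Y).\]

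On the other hand, $f$ is a \emph{smooth} family of abelian varieties, so the polarized variation of Hodge structure on $R^1f_*\KQ$ has empty degeneration locus, and the Arakelov inequality of Viehweg and Zuo (\cite{Zuo}) bounds the slope of its $(1,0)$--part with no logarithmic boundary term, namely $\mu_H(\E) \le \frac{1}{2}\mu_H(\Omega^1_Y)$ (a possible flat unitary summand of $\E$ only lowers the left--hand side). Combining the two estimates yields $\mu_H(\Omega^1_Y) \le \frac{1}{2}\mu_H(\Omega^1_Y)$, hence $K_Y.H^{\dim Y - 1} \le 0$, contradicting the ampleness of $K_Y$. This excludes the infinite case, and together with \Prop{Finite} proves that $X$ is almost abelian.

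The routine parts are the slope bookkeeping and the identification $\Omega^1_{X/Y} \simeq f^*\E$. The genuinely delicate point is invoking the Arakelov inequality in the correct normalization: I expect the main obstacle to be verifying that the Viehweg--Zuo estimate applies with \emph{no} boundary contribution (which is exactly where smoothness of the group scheme $f$, guaranteeing empty degeneration, is essential) and with the constant $\frac12$, so that its upper bound is strictly below the lower bound $\mu_H(\Omega^1_Y)$ forced by $(*)$. The identity $n - g = \dim Y$ is what makes the two bounds collide, so I would take care to track it explicitly rather than absorbing it into constants.
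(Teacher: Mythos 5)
Your proof is correct and takes essentially the same route as the paper: the finite case is \Prop{Finite}, and in the infinite case the paper likewise combines \Theo{AbSch} with the slope inequality $\mu_{K_Y}(\Omega^1_Y) \le \mu_{K_Y}(E^{1,0})$ forced by $(*)$ (there obtained from the cotangent sequence $0 \lra f^*\Omega^1_Y \lra \Omega^1_X \lra f^*E^{1,0} \lra 0$ restricted to a section, with polarization $H = K_Y$) and the Arakelov bound $2\mu_{K_Y}(E^{1,0}) \le \mu_{K_Y}(\Omega^1_Y)$ to contradict ampleness of $K_Y$. One detail: the Arakelov inequality you invoke is \cite{VZ2}, Theorem~1 and Remark~2 (Viehweg--Zuo), not \cite{Zuo}.
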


\begin{proof}
 If $\sigma: \pi_1(X) \lra PGl_{n-1}(\KC)$ has a finite image, then we are done by \Ref{Finite}. Assume $Im(\sigma)$ is infinite.

By \Ref{AbSch}, after replacing $X$ by the finite \'etale cover, $X$ is an abelian group scheme over a smooth base $Y$ such that $K_Y$ is ample, $d := \dim Y > 0$. Let $E^{1,0} = f_*\Omega^1_{X/Y}$. Then $f^*E^{1,0} \simeq \Omega^1_{X/Y}$ and we have the exact sequence
    \[0 \lra f^*\Omega_Y^1 \lra \Omega_X^1 \lra f^*E^{1,0} \lra 0\]
Restrict everything to a smooth section which we denote by $Y$ as well. Consider the polarization given by $K_Y$. Condition $(*)$ implies
   \[\mu_{K_Y}(\Omega_X^1|_Y) \le \mu_{K_Y}(E^{1,0}).\]
Then $d\mu_{K_Y}(\Omega_Y^1) + (n-d)\mu_{K_Y}(E^{1,0}) = n\mu_{K_Y}(\Omega_X^1) \le n\mu_{K_Y}(E^{1,0})$ or $\mu_{K_Y}(\Omega_Y^1) \le \mu_{K_Y}(E^{1,0})$. By \cite{VZ2}, Theorem~1 and Remark~2, on the other hand 
 $\mu_{K_Y}(R^1f_*\KC) \le \mu_{K_Y}(\Omega_Y^1)$ where $\mu_{K_Y}(R^1f_*\KC) = 2\mu(E^{1,0})$. Then we find $2\mu_{K_Y}(\Omega_Y^1) \le \mu_{K_Y}(\Omega_Y^1)$. Then $K_Y^d = 0$, a contradiction.
\end{proof}

\end{document}